\documentclass[12pt,reqno]{amsart}
\usepackage{amsfonts,amssymb,amsmath,amsopn,amsthm,graphicx}
\usepackage{amsxtra, mathrsfs}
\usepackage{colordvi}
\usepackage[usenames,dvipsnames]{color}
\usepackage{amsfonts,amssymb,amsbsy,amsmath,amsthm,dsfont}

\usepackage{amsmath,amssymb,amsfonts,amscd,color}
\usepackage[latin1]{inputenc}
\usepackage{verbatim}


\linespread{1.1} \numberwithin{equation}{section}
\setlength{\voffset}{-.7truein}
\setlength{\textheight}{8.8truein}
\setlength{\textwidth}{6.1truein}
\setlength{\hoffset}{-.7truein}


\newcommand{\Hmm}[1]{\leavevmode{\marginpar{\tiny%
			$\hbox to 0mm{\hspace*{-0.5mm}$\leftarrow$\hss}%
			\vcenter{\vrule depth 0.1mm height 0.1mm width \the\marginparwidth}%
			\hbox to
			0mm{\hss$\rightarrow$\hspace*{-0.5mm}}$\\\relax\raggedright #1}}}

\newcommand{\bb}{{\mathbf{\overline {b}}}}
\newcommand{\bt}{{\mathbf{\tilde{b}}}}

\newcommand{\loc}{{\rm loc}}

\newcommand{\N}{\mathbb{N}}

\newcommand{\R}{\mathbb{R}}

\newtheorem{theorem}{Theorem}[section]

\newtheorem{thm}[theorem]{Theorem}
\newtheorem{lem}[theorem]{Lemma}
\newtheorem{lemma}[theorem]{Lemma}
\newtheorem{proposition}[theorem]{Proposition}
\newtheorem{example}[theorem]{Example}

\newtheorem{definition}[theorem]{Definition}
\newtheorem{defi}[theorem]{Definition}
\newtheorem{remark}[theorem]{Remark}
\newtheorem{rem}[theorem]{Remark}


\theoremstyle{definition}


\newcommand{\diver}{\mathrm{div}\,}
\newcommand{\dx}{\,\mathrm{d}x}
\newcommand{\dy}{\,\mathrm{d}y}
\newcommand{\dz}{\,\mathrm{d}z}
\newcommand{\dt}{\,\mathrm{d}t}

\newcommand{\ds}{\,\mathrm{d}s}

\newcommand{\dtau}{\,\mathrm{d}\tau}

\newcommand{\dsigma}{\,\mathrm{d}\sigma}

\newcommand{\core}{C_0^{\infty}(\Omega)}

\newcommand{\be}{\begin{equation}}
\newcommand{\ee}{\end{equation}}
\newcommand{\bea}{\begin{eqnarray}}
\newcommand{\eea}{\end{eqnarray}}
\newcommand{\bean}{\begin{eqnarray*}}
	\newcommand{\eean}{\end{eqnarray*}}


\newcommand{\opname}[1]{\mbox{\rm #1}\,}
\newcommand{\supp}{\opname{supp}}

\newlength{\wex}  \newlength{\hex}
%


\def\ga{\alpha}     \def\gb{\beta}       
             
                         \def\vge{\varepsilon}
\def\gf{\phi}       \def\vgf{\varphi}    
            \def\gl{\lambda}

\def\Gg{\Gamma}     \def\Gd{\Delta}

\def\Gw{\Omega}              

\begin{document}
	
	\title[Families of optimal Hardy-weights]{On families of optimal Hardy-weights for linear second-order elliptic operators}
	%
	\author{Yehuda Pinchover}
	
	\address{Yehuda Pinchover,
		Department of Mathematics, Technion - Israel Institute of
		Technology,   Haifa, Israel}
	
	\email{pincho@technion.ac.il}
	
	\author {Idan Versano}
	
	\address {Idan Versano, Department of Mathematics, Technion - Israel Institute of
		Technology,   Haifa, Israel}
	
	\email {idanv@campus.technion.ac.il}
	\begin{abstract}
We construct families of optimal Hardy-weights for a subcritical linear second-order elliptic operator using a one-dimensional reduction. More precisely, we first  characterize all optimal Hardy-weights with respect to one-dimensional subcritical Sturm-Liouville operators on $(a,b)$, $\infty\leq a<b\leq \infty$, and then apply this result to obtain families of optimal Hardy inequalities for general linear second-order elliptic operators in higher dimensions. As an application,  we prove a new Rellich inequality.  
	
		\medskip
		
		\noindent  2000  \! {\em Mathematics  Subject  Classification.}
		Primary  \! 35B09; Secondary  35J08, 35J20, 47J20, 49J40.\\[1mm]
		\noindent {\em Keywords:} ground state, Hardy inequality, minimal growth, positive solutions, Rellich inequality, variational inequalities.
	\end{abstract}
	
	\maketitle

	%
	\section{Introduction}
The classical Hardy inequalities in the continuum and discrete cases were introduced in 1920th, and has been evolved to many versions in different aspects of mathematics, see \cite{BEL,D,KO,KML} for historical reviews. The problem of improving Hardy-type inequalities has engaged many authors till today, see \cite{BEL,BFT,BM}, and reference therein.  Fraas, Devyver and Pinchover, established  a general method to generate an explicit {\em optimal} Hardy-weight for a general (not necessarily symmetric) subcritical linear elliptic operators either in divergence form or in non-divergence form \cite{DFP}. In some definite sense, an optimal Hardy-weight is {\em `as large as possible'} Hardy-weight. Such an optimal weight is by no means unique. In the present paper we construct for a given linear second-order nonnegative elliptic operator $P$ {\em families} of optimal Hardy-weights, generalizing the results in \cite{DFP}. In fact, for the one-dimensional case, we characterize the set of all optimal Hardy-weights. We then apply this characterization to obtain families of optimal Hardy inequalities for general linear second-order elliptic operators in higher dimensions.
	 
	 \medskip 
	 
Let us first recall the notion of optimal Hardy-weights and present the main result of \cite{DFP}.	
	\begin{definition}
		{\em Let $\Omega$ be a domain in $\mathbb{R}^n$ (or a noncompact $n$-dimensional Riemannian manifold), and consider a second-order linear elliptic operator $P$ defined on $\Gw$. 
			\begin{itemize}
				\item We say that $P$ is {\em nonnegative} in $\Gw$ (in short, $P\geq 0$ in $\Gw$) if $P$ admits a positive global  (super)solution of the equation $Pu=0$ in $\Gw$. 
				\item  We say that a nonzero nonnegative function $W$ is a {\em Hardy-weight} for $P$ in $\Gw$ if the following {\em Hardy-type inequality}
				$P-W\geq 0$ in $\Gw$  holds true.  
		\item	A nonnegative operator $P$ in $\Gw$ is said to be {\em subcritical}  (respect., {\em critical})  in $\Gw$ if $P$ admits (respect., does not admit) a Hardy-weight for $P$ in $\Gw$.   
	\end{itemize}
}
\end{definition}

It is well-known that $P$ is subcritical in $\Gw$ if and only if  it admits a positive minimal Green function $G_{P}^{\Gw}(x,y)$ in $\Gw$. On the other hand,  $P$ is critical in $\Gw$ if and only if the equation $Pu=0$ admits (up to a multiplicative constant) a unique positive supersolution $\phi$ in $\Gw$. In fact, $\phi$  is  a positive solution, called the {\em Agmon ground state}. Clearly, $P$ is critical in $\Gw$ if and only if $P^\star$ is critical in $\Gw$, where $P^\star$ is the formal adjoint of $P$ in $L^2(\Gw)$ \cite{P3}.

	\begin{definition}
		{\em Let $P$ be nonnegative in $\Gw$. A Hardy-weight $W$ is said to be optimal in $\Gw$ if $P-W$ is critical in $\Gw$, and $\phi \phi^*\not\in L^1(E, W\dx)$, where $\phi$ and $\phi^*$ are respectively, the ground states of $P-W$ and $P^\star-W$ in $\Gw$, and $E\subset \Gw$ is any end of $\Gw$.   In this case we say that $P-W$ is {\em null-critical at each end of $\Gw$} with respect to the weight $W$.			
}
\end{definition}
\begin{rem}\label{rem_1.3}{\em 
	We say that a Hardy-weight $W$ is {\em optimal at infinity in $\Gw$} if for any $K\Subset \Gw$, 
	$$\sup\{\gl \in \R \mid P-\gl W\geq 0 \mbox{ in } \Gw\setminus K\}=1.$$   	
We note that the original definition of optimal Hardy-weights in \cite{DFP} includes the requirement that $W$ is optimal at infinity. Recently it was proved in \cite[Corollary 3.4]{KP} that the latter property  follows, in fact,  from the null-criticality of $P-W$ with  respect to $W$.	 
}
\end{rem}
The main result of Fraas, Devyver and Pinchover (\cite[Theorem 4.12]{DFP}, see also \cite[Theorem 2.15]{KP}) reads as follows.	
		\begin{thm}\label{thm:DFP} 
		Let $P$ be a subcritical operator in $\Gw$, and let $G_{P}^{\Gw}$ be its positive minimal Green function. Let $0\lneqq \varphi \in \core$,
		and consider the Green potential 
		\begin{equation}\label{green potential}
		G_{\varphi}(x):= \int_{\Gw} G_{P}^{\Gw}(x,y) \varphi(y) \dy,
		\end{equation}
		(so, $PG_{\varphi}=\varphi$ in $\Gw$).
		Suppose that there exists a positive solution $u$ of the equation $P=0$ in $\Gw$ satisfying 
		\begin{equation}\label{Ancona}
		\lim_{x\to \overline{\infty}} \dfrac{G(x)}{u(x)}=0,
		\end{equation}
		where $\overline { \infty }$ denotes the ideal point in the one-point compactification of $\Omega$.
		
		Consider the supersolution $$v:=\sqrt{G_{\varphi}u}$$  of the operator $P$ in $\Gw$. Then the associated 
		Hardy-weight $$W:= \dfrac{Pv}{v} $$ is an optimal Hardy-weight 
		with respect to $P$ in $\Gw.$ Moreover, 
		$$
		W=\dfrac{1}{4} \left | \nabla \left ( \log \dfrac{G_{\varphi}}{u}\right ) \right |_{A}^2 = \dfrac{1}{4} \dfrac{\left | \nabla \left (  (G_{\varphi}/u)\right ) \right |_{A}^2}{(G_{\varphi}/u)^2} \qquad
		 \text{in} ~ \Gw \setminus \supp \varphi,
		$$
		where $A$ is the diffusion matrix associated with the principal part of the operator $P$, $| \xi|_A:=\sqrt{\langle A\xi,\xi \rangle }$, and $\langle \cdot, \cdot \rangle$ denotes the  Euclidean inner product on $\R^n$. 	
	\end{thm}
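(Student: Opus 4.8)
The plan is to proceed in three stages: (i) verify that $v:=\sqrt{G_\varphi u}$ is a positive supersolution of $P$ and compute $W=Pv/v$ explicitly; (ii) deduce the Hardy inequality $P-W\ge 0$; and (iii) prove that $P-W$ is critical and null-critical at each end, by passing to the ground-state transform generated by $v$ and reducing the resulting weighted problem to a one-dimensional problem along the level sets of $\log(G_\varphi/u)$. For stage (i) I would first record the elementary identity for the square root of a product of positive functions in the domain of $P$: in divergence (in particular symmetric) form,
\[
\frac{P\sqrt{\phi_1\phi_2}}{\sqrt{\phi_1\phi_2}}=\frac12\left(\frac{P\phi_1}{\phi_1}+\frac{P\phi_2}{\phi_2}\right)+\frac14\left|\nabla\log\frac{\phi_1}{\phi_2}\right|_A^2 ,
\]
which follows from $\nabla\log\sqrt{\phi_1\phi_2}=\tfrac12\bigl(\nabla\phi_1/\phi_1+\nabla\phi_2/\phi_2\bigr)$ and the symmetry of $A$; for a general (possibly non-symmetric, possibly non-divergence-form) $P$ one first passes to the associated divergence-form operator and performs a ground-state transform removing the zeroth-order term and absorbing the drift, so that only the principal part $A$ survives in the last term. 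Taking $\phi_1=G_\varphi$, $\phi_2=u$ and using $Pu=0$, $PG_\varphi=\varphi$ gives
\[
W=\frac{Pv}{v}=\frac14\left|\nabla\log\frac{G_\varphi}{u}\right|_A^2+\frac{\varphi}{2G_\varphi}\ \ge\ \frac14\left|\nabla\log\frac{G_\varphi}{u}\right|_A^2\ \ge\ 0,
\]
which is exactly the asserted formula on $\Omega\setminus\supp\varphi$ (the two displayed expressions there agree by the chain rule), and in particular $v>0$ satisfies $Pv=Wv\ge 0$, i.e.\ $(P-W)v=0$. Stage (ii) is then immediate: since $v>0$ solves $(P-W)v=0$ and $W\in L^\infty_\loc(\Omega)$ (because $G_\varphi,u$ are positive and locally in $W^{2,p}$), the operator $P-W$ admits a positive global solution, hence is nonnegative in $\Omega$, so $W$ is a Hardy-weight.

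For stage (iii), the ground-state transform of $P-W$ by its positive solution $v$ is (in the divergence-form model) the weight-$v^2$ operator $-\tfrac1{v^2}\diver\!\bigl(v^2A\nabla\,\cdot\,\bigr)$, with quadratic form $\psi\mapsto\int_\Omega v^2\langle A\nabla\psi,\nabla\psi\rangle\dx$ and no zeroth-order term because $(P-W)v=0$; moreover $P-W$ is critical with ground state $\phi=v$ precisely when this weighted form is critical, its ground state being the constant $1$. The structural observation is that $\psi_0:=\log(G_\varphi/u)$ solves this transformed operator on $\Omega\setminus\supp\varphi$: a short computation using the symmetry of $A$ gives $\diver\!\bigl(v^2A\nabla\psi_0\bigr)=\diver\!\bigl(uA\nabla G_\varphi-G_\varphi A\nabla u\bigr)=-u\varphi$, so $v^2A\nabla\psi_0$ is divergence-free off $\supp\varphi$, and the divergence theorem shows that its flux through a level set $\{\psi_0=t\}$ equals $c_0(t)=\int_{\{\psi_0\ge t\}}u\varphi\dx$, which is bounded and equals the constant $c_0:=\int_\Omega u\varphi\dx>0$ for all $t$ below the (bounded) range of $\psi_0$ on $\supp\varphi$ --- in particular on every end. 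The coarea formula together with the identity $\langle A\nabla\psi_0,\nabla\psi_0\rangle/|\nabla\psi_0|=\langle A\nabla\psi_0,\nu\rangle$ then gives, for functions of $\psi_0$,
\[
\int_\Omega v^2\bigl\langle A\nabla F(\psi_0),\nabla F(\psi_0)\bigr\rangle\dx=\int_{\psi_0(\Omega)}c_0(t)\,|F'(t)|^2\dt .
\]

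Thus along the level sets of $\psi_0$ the transformed operator behaves like a one-dimensional operator on $L^2\bigl(\psi_0(\Omega),\dt\bigr)$ whose only essential endpoint is $-\infty$, the finite top endpoint being an interior maximum of $\psi_0$ with a Neumann-type condition. By the hypothesis $G_\varphi/u\to 0$ as $x\to\overline{\infty}$ we have $\psi_0\to-\infty$ at $\overline{\infty}$, so $\psi_0$ is proper there and $\psi_0(\Omega)$ is a half-line; since $c_0(t)$ is bounded and bounded away from $0$ near $-\infty$, we may choose $F_k\nearrow 1$ with $F_k'$ supported far out (e.g.\ $F_k$ ramping from $0$ to $1$ on $[-k^2,-k]$), so that $w_k:=F_k(\psi_0)\,v$ are admissible, compactly supported test functions with $w_k\to v$ locally and Rayleigh quotient $\int c_0(t)|F_k'(t)|^2\dt\to 0$; hence $(w_k)$ is a null-sequence and $P-W$ is critical with ground state $\phi=v$. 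Finally, in the symmetric case $\phi^*=\phi=v$, and for any end $E$,
\[
\int_E W\,\phi\phi^*\dx=\frac14\int_E v^2\,|\nabla\psi_0|_A^2\dx=\frac14\int_{\psi_0(E)}c_0(t)\dt=\frac{c_0}{4}\,\bigl|\psi_0(E)\bigr|=\infty ,
\]
which is exactly null-criticality at each end; by Remark~\ref{rem_1.3} optimality at infinity follows automatically, so $W$ is an optimal Hardy-weight.

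The work is concentrated in two places. First, the coarea/level-set analysis of $\psi_0$ must be made rigorous: handling the critical set of $\psi_0$ via Sard, justifying the ``constant flux'' identity in the weak formulation, and checking that the bounded region near $\supp\varphi$ (where $\psi_0$ is not transformed-harmonic and $W$ carries the extra term $\varphi/2G_\varphi$) contributes only a harmless correction. Second --- and this is the genuine obstacle --- the non-symmetric case: there $\phi^*\ne\phi$, and one must produce the ground state $\phi^*$ of the adjoint $P^\star-W$ (whose criticality is equivalent to that of $P-W$, as recalled in the Introduction) and show $\phi\phi^*\asymp v^2$ near each end by pointwise Green-function bounds and Harnack chains, so that the concluding integral is again a positive multiple of $\int_{(-\infty,T)}\dt=\infty$; alternatively one reduces to the symmetric situation by a Doob transform and verifies that the antisymmetric first-order part contributes nothing to the relevant quadratic form.
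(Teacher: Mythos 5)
Note first that the paper does not prove Theorem~\ref{thm:DFP} at all: it is recalled from \cite[Theorem~4.12]{DFP} and serves as motivation. The closest in-paper argument is the proof of Theorem~\ref{criticality_linear}, which subsumes the present statement (take $w(t)=1/(4t^2)$, $f_w(t)=\sqrt{2t}$), so I compare against that. Your stage-(i) product identity for $P\sqrt{\phi_1\phi_2}$ and the resulting formula for $W$ agree with (4.13) of \cite{DFP}, and your null-criticality computation in the symmetric case --- coarea along level sets of $\psi_0=\log(G_\varphi/u)$ together with constancy, by Green's formula, of the flux $\int_{\{\psi_0=t\}}\langle uA\nabla G_\varphi-G_\varphi A\nabla u,\vec{\sigma}\rangle\dsigma$ --- is exactly the argument carried out in the paper's proof of Theorem~\ref{criticality_linear}.

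The proposal has a genuine gap, however, and it is not confined to the ``genuine obstacle'' you flag at the end. Your criticality argument (stage (iii)) is variational: you ground-state transform $P-W$ by $v$, claim the result is $-v^{-2}\diver(v^2A\nabla\,\cdot\,)$, and exhibit a null-sequence. This works only for symmetric (or quasi-symmetric) $P$. For non-symmetric $P$ the transform retains a first-order drift, there is no associated quadratic form, and ``null-sequence'' is not an available device. The paper avoids this by proving directly that $v$ has minimal growth at $\overline{\infty}$ via the Khas'minski\u{i}-type criterion \cite[Proposition~6.1]{DFP}, comparing $v$ with the second solution $h=uf_1(G_\varphi/u)$; that argument is non-variational and works for arbitrary $P$, and it is the one you should use. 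As for null-criticality in the non-symmetric case, you correctly identify it as the hard point and leave it open --- but note that the present paper has the same limitation: item (4) of Theorem~\ref{criticality_linear} asserts null-criticality only for symmetric $P$, and Section~\ref{sec-improved-optimal} introduces the Ermakov--Pinney construction precisely to recover an optimal weight in the non-symmetric setting by a different device. Closing the gap for the actual weight $W$ of Theorem~\ref{thm:DFP} when $P$ is non-symmetric requires the quantitative Green-function estimates relating $\phi\phi^*$ to $v^2$ carried out in \cite{DFP}; the one-line gestures you make (Harnack chains, Doob transform) are not a proof, and they in fact constitute the hard part of the original DFP argument.
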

\begin{rem}{\em
		 Since $G_{\varphi} \asymp G$ in a neighborhood of $\bar\infty$, the theorem's assumption  
$\lim_{x\to \overline{\infty}} \big(G(x)/u(x)\big)=0$ clearly implies $\lim_{x\to \overline{\infty}} \big(G_{\varphi}(x))/u(x)\big)=0$. We recall that if $P$ is symmetric, or more generally quasi-symmetric, then a global positive solution $u$ satisfying \eqref{Ancona} always exists \cite{Ancona02}.
	}
\end{rem}
\begin{rem}{\em 
The above result has been extended to the case of the $p$-Laplace operator by Devyver and Pinchover \cite{DP1} and recently to the realm of Schr\"odinger operators on discrete graphs by  Keller, Pinchover, and Pogorzelski \cite{KPP}.
	}
\end{rem}
	The optimal Hardy-weights obtained by Theorem~\ref{thm:DFP}  are obviously not unique even in the one-dimensional case. The main aim of the present paper is to {\em characterize} the set of all {\em optimal} Hardy-weights for Sturm-Liouville operators, and to establish a new construction of families of optimal Hardy-weights for general second-order subcritical linear elliptic operators using a one-dimensional reduction.         		

\medskip

The paper is organized as follows. In Section~\ref{sec_prelim}, we introduce the necessary notation and recall some previously obtained results needed in the present paper. We proceed in Section~\ref{sec-1d},	 with the characterization of the set of all optimal Hardy weights for Sturm-Liouville operators, while in Section~\ref{sec_EP_eq}, we use the explicit  form of the general  solution of the (semilinear) Ermakov-Pinney equation to obtain a family of strictly positive Hardy-weights for one-dimensional subcritical Schr\"odinger operators which are optimal at one end. Applying the above characterization of optimal Hardy-weights for the one-dimensional case, we obtain in Section~\ref{sec-optimal_higher_d}  families of Hardy inequalities in the higher dimensional case. More precisely, in the symmetric case, our construction leads to a family of {\em optimal} Hardy weights, while in the nonsymmetric, case we are only able to prove that the aforementioned one-dimensional reduction gives rise  to critical Hardy-weights that are optimal at infinity. Finally, in Section~\ref{sec-improved-optimal}, we use the Ermakov-Pinney equation to obtain in the nonsymmetric higher-dimensional case, a new family of optimal Hardy-weights which are larger at infinity than the optimal Hardy-weights obtained in \cite{DFP}.

\medskip

Throughout the paper we use the following notation.  Let $\Gw$ be a domain in  $\R^n$, $ n\geq 1$.
\begin{itemize}
	\item For any $R>0$ and $x\in \R^n$, we denote by $B_R(x)$ the open ball of radius $R$ centered at $x$.
	\item $\mathrm{AC}_{\mathrm{loc}}(I)$ refers to the space of all absolutely continuous real functions defined on an interval $I\subset \R$.  
	\item For any $\xi\in \R^n$ and a positive definite matrix $A\in \R^{n \times n}$, let  
	$| \xi|_A:=\sqrt{\langle A\xi,\xi \rangle }$, where $\langle \cdot, \cdot \rangle$ denotes the Euclidean inner product on $\R^n$.
	\item We write $\Gw_1 \Subset \Gw_2$ if $\Gw_2$ is open in $\Gw,$ the set $\overline{\Gw_1}$ is compact, and $\overline{\Gw_1}\subset \Gw_2$. 
	\item $C$ refers to a positive constant which may vary from  line to line.
	\item Let $g_1,g_2$ be two positive functions defined in $\Gw$. We use the notation $g_1\asymp g_2$ in
	$\Gw$ if there exists a positive constant $C$ such
	that
	$$C^{-1}g_{2}(x)\leq g_{1}(x) \leq Cg_{2}(x) \qquad \mbox{ for all } x\in \Gw.$$	
\end{itemize}
	\section{Preliminaries}\label{sec_prelim}
	Let $\Omega$ be a domain in $\mathbb{R}^n$.
	We consider a second-order linear elliptic operator $P$ with real coefficients defined in $\Gw$ which is either
	in divergence form
	\begin{equation}\label{eq:div_form_linear}
	Pu:=-\diver \left [ A(x)\nabla u+u \bt (x)\right ]+\bb(x)\cdot \nabla u+c(x)u,
	\end{equation}
	or in the form
	\begin{equation}\label{eq:non_div_form_linear}
	Pu:=-\sum_{i,j=1}^{n}a^{ij}(x)\partial_i \partial_ju+\mathbf{b}(x)\cdot \nabla u+c(x) u,
	\end{equation}
	where $A(x)=(a_{ij})\in \R^{n \times n}$ is a symmetric positive definite matrix in $\Gw$, and  $\bt,\bb,\mathbf{b},$ are vector fields.
	We also assume that  $P$ is locally uniformly elliptic, that is, for any $K\subset \subset \Omega$ there exist $0<\lambda_K\leq \Lambda_K$ such that for all
	$\xi \in \mathbb{R}^n$ and $x\in K$
	$$
	\lambda _K|\xi|^2 \leq | \xi|_A^2=\sum_{i,j=1}^{n}a^{ij}(x)\xi_i\xi _j \leq \Lambda_K |\xi|^2.
	$$
	We further assume the following regularity assumptions on the coefficients of $P$:
	\begin{itemize}
	\item If $P$ is of the form \eqref{eq:div_form_linear}, then we further assume that $A$ is measurable, $\bb,\bt \in L^{p}_\loc(\Gw,\R^n)$ and $c\in L^{p/2}_\loc(\Gw)$ for some $p>n$.
	 \item 
	 If $P$ is of the form \eqref{eq:non_div_form_linear}, then we assume that $A, \mathbf{b},c$ are locally H\"older continuous in $\Gw$.
	 \end{itemize}
 For a divergence form operator $P$, we say that $u$ is a {\em (weak) solution} of $Pu=0$ in $\Gw$ if $u\in W^{1,2}_{\mathrm{loc}}(\Omega)$ and the associated bilinear form $B$ satisfies 
 $$
 B(u,\varphi):=\int_{\Omega} \left(\langle
 A(x) \nabla u,\nabla \varphi\rangle +u\langle\bt, \nabla\varphi\rangle + \langle\bb, \nabla u\rangle\varphi  +cu \varphi \right)\!\!\dx =0
 $$
 for every test function $\varphi\in C^{\infty}_{0}(\Omega)$. 	Similarly, $u\in W^{1,2}_{\mathrm{loc}}(\Omega)$ is a {\em supersolution} (respect., {\em subsolution}) of $Pu=0$ in $\Gw$ if $B(u,\varphi) \geq 0$ (respect., $B(u,\varphi)\leq 0$) for all $0\leq \varphi\in C^{\infty}_{0}(\Omega)$. 	
 We denote by $Q$ the associated quadratic form $Q(\vgf):=B(\vgf,\vgf)$ on $\core$.

By elliptic regularity theory,  each (weak) solution of $Pu=0$ in $\Gw$ is locally H\"older continuous in $\Omega.$
We say that $P$ of the form \eqref{eq:div_form_linear} is {\em symmetric} if $\bt=\bb$ (since in this case the bilinear form $B$ is symmetric on $C^{\infty}_{0}(\Omega)$).

 For an operator $P$ of the form \eqref{eq:non_div_form_linear}, we say that 
$u$ is a {\em (classical) solution} (respect., {\em supersolution}, {\em subsolution}) of $Pu=0$ in $\Gw$ if $u\in C^2(\Gw) $ and
 $$
-\sum_{i,j=1}^{n}a^{ij}(x)\partial_i \partial_ju(x)+\mathbf{b}(x)\cdot \nabla u(x)+c(x) u(x)=0 \;(\mbox{respect.}, \;\geq 0,\;\leq 0 ) \qquad \forall x\in \Gw.
$$
		
	We denote the cone of all positive solutions to the equation $Pu=0$ in $\Gw$ by $\mathcal{C}_{P}(\Omega)$, and by $\mathcal{S}_{P}(\Omega)$ the cone of all positive supersolutions in $\Gw$.
	The operator $P$ is said to be {\em nonnegative} in $\Omega$ if $\mathcal{C}_{P}(\Omega)\ne \emptyset$, (or equivalently, $\mathcal{S}_{P}(\Omega)\ne \emptyset$), and in such a case we write $P\geq 0$ in $\Gw$.

	The  following well known Allegretto-Piepenbrink theorem  (in short, the AP theorem) connects between the nonnegativity of a {\em symmetric} operator $P$ and the nonnegativity of the associated quadratic form $Q$   (see for example \cite{AG,CFKS,P3} and references therein):
	\begin{theorem}[The AP theorem]
			Let $P$ be a symmetric operator. Then the following assertions are equivalent
			\begin{enumerate}
				\item $Q(\varphi) \geq 0$ for all $\varphi \in C_{0}^{\infty}(\Omega)$.
				\item $\mathcal{C}_P(\Gw) \neq \emptyset$.
				\item $\mathcal{S}_P(\Gw) \neq \emptyset$.
			\end{enumerate}
		
	\end{theorem}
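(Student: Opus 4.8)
The plan is to establish the cycle $(2)\Rightarrow(3)\Rightarrow(1)\Rightarrow(2)$. The implication $(2)\Rightarrow(3)$ is immediate, since every positive solution of $Pu=0$ is a positive supersolution, i.e.\ $\mathcal{C}_P(\Gw)\subseteq\mathcal{S}_P(\Gw)$. For $(3)\Rightarrow(1)$ I would use the \emph{ground state transform} (a Picone-type identity). Let $0<v\in\mathcal{S}_P(\Gw)$, so $v\in W^{1,2}_{\loc}(\Gw)$ and $B(v,\phi)\geq 0$ for all $0\leq\phi\in\core$. Fix $\varphi\in\core$ and put $\psi:=\varphi/v$. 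Expanding $|\nabla(v\psi)|_A^2$ and using the symmetry $\bt=\bb$, one verifies the algebraic identity
\[
Q(\varphi)=B\!\left(v,\frac{\varphi^2}{v}\right)+\int_{\Gw}v^2\,|\nabla\psi|_A^2\dx .
\]
Since $\varphi/v$ has compact support and $v$ is locally bounded away from $0$ and locally in $W^{1,2}$, the function $\varphi^2/v$ is a nonnegative element of $W^{1,2}(\Gw)$ with compact support, so the supersolution inequality extends to it by density; hence $B(v,\varphi^2/v)\geq 0$ and $Q(\varphi)\geq\int_{\Gw}v^2|\nabla\psi|_A^2\dx\geq 0$.

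The substantive implication is $(1)\Rightarrow(2)$. Fix a smooth exhaustion $\Gw_1\Subset\Gw_2\Subset\cdots$ of $\Gw$ with each $\Gw_k$ connected, and a point $x_0\in\Gw_1$. Since $Q\geq 0$ on $\core\supseteq C_0^\infty(\Gw_k)$, the variational characterization of the principal Dirichlet eigenvalue gives $\gl_1(\Gw_k)\geq 0$; by strict domain monotonicity of $\gl_1$ we then get $\gl_1(\Gw_k)>\gl_1(\Gw_{k+1})\geq 0$, so $\gl_1(\Gw_k)>0$ for every $k$. Consequently the Dirichlet problem for $P$ on each $\Gw_k$ is uniquely solvable and satisfies the generalized maximum principle; let $w_k$ solve $Pw_k=0$ in $\Gw_k$ with $w_k=1$ on $\partial\Gw_k$, so $w_k>0$ in $\Gw_k$ by the strong maximum principle, and set $u_k:=w_k/w_k(x_0)$, a positive solution of $Pu_k=0$ in $\Gw_k$ with $u_k(x_0)=1$. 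For fixed $m$ and all $k>m$, the Harnack inequality on $\Gw_m$ (valid under the stated $L^p_{\loc}$, $L^{p/2}_{\loc}$ regularity of the lower-order coefficients) together with a Harnack chain joining an arbitrary point to $x_0$ bounds $(u_k)$ locally uniformly from above and below on $\Gw$; De Giorgi--Nash--Moser interior Hölder estimates and Caccioppoli local energy bounds then yield a subsequence converging in $C_{\loc}(\Gw)$, with gradients converging weakly in $L^2_{\loc}(\Gw)$, to a limit $u$ with $u(x_0)=1$. Passing to the limit in the weak formulation $B(u_k,\varphi)=0$ shows $Pu=0$ in $\Gw$, and $u>0$ by the strong maximum principle; hence $u\in\mathcal{C}_P(\Gw)$, proving $(2)$.

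I expect the main obstacle to be the passage to the limit in $(1)\Rightarrow(2)$, specifically showing that $u$ is \emph{not identically zero and strictly positive}: this is exactly what the Harnack inequality together with the normalization at $x_0$ delivers, and it is the step most sensitive to the low regularity of the drift $\bb=\bt\in L^p_{\loc}$, forcing the use of the $L^p$-versions of the Harnack inequality and of the interior estimates rather than classical Schauder theory. A secondary, more routine point is the justification in $(3)\Rightarrow(1)$ that $\varphi^2/v$ is an admissible competitor in the inequality $B(v,\cdot)\geq 0$, which rests on the local boundedness from below and $W^{1,2}_{\loc}$ regularity of positive supersolutions and on a density argument extending the form inequality from $\core$ to compactly supported $W^{1,2}$ functions.
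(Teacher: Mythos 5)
The paper states the Allegretto--Piepenbrink theorem without proof, referring the reader to the literature (\cite{AG,CFKS,P3}), so there is no in-paper argument to compare against; what follows is an assessment of your proposal on its own terms.

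Your cycle $(2)\Rightarrow(3)\Rightarrow(1)\Rightarrow(2)$ is the standard proof of the theorem as found in those references, and it is correct. The ground-state-transform identity in $(3)\Rightarrow(1)$ checks out: with $\bt=\bb$, writing $\varphi=v\psi$ and expanding $Q(\varphi)$ and $B(v,v\psi^2)$ term by term gives exactly $Q(\varphi)=B\bigl(v,\varphi^2/v\bigr)+\int_\Gw v^2|\nabla\psi|_A^2\dx$, and the two delicate points you flag (that positive supersolutions in $W^{1,2}_{\loc}$ are locally bounded away from zero via the weak Harnack inequality, and that $\varphi^2/v$ is therefore an admissible nonnegative $W^{1,2}$ test function with compact support) are the right ones and handle the regularity available under the paper's hypotheses. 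For $(1)\Rightarrow(2)$, the exhaustion, strict domain monotonicity of $\gl_1$, Dirichlet solvability, normalization at $x_0$, Harnack chains, De Giorgi--Nash--Moser compactness, and passage to the limit in the weak formulation is the classical route and you correctly identify where the $L^p$ versions of Harnack and the interior estimates are needed. A stylistic remark: one can avoid solving the boundary-value problem with data $1$ by instead taking the normalized principal Dirichlet eigenfunctions $\phi_k$ of $\Gw_k$, which satisfy $P\phi_k=\gl_1(\Gw_k)\phi_k\geq 0$; the same Harnack/compactness machinery then produces a positive supersolution directly, and one concludes $\mathcal{C}_P(\Gw)\neq\emptyset$ via the already-proved $(3)\Rightarrow(1)\Rightarrow$\,(the solution constructed). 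This is a choice of implementation, not a gap; your version is equally valid.
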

	\begin{defi}
		\em{
			A sequence $\{\Omega_k\}_{k\in \mathbb{N}}$ 
			is called an ${exhaustion}$  of $\Gw$ if it is
			an increasing sequence of precompact subdomains of $\Gw$ with smooth boundary such that $\Omega_k\subset\subset \Omega_{k+1}$
			and $\bigcup_k \Omega_k=\Omega$. For $k\geq 1$ we denote $\Omega_{k}^{*}:=\Omega \setminus \overline{\Omega_k}$.
		}
	\end{defi}
	\begin{defi}{\em
			Let $\{\Omega_k\}_{k\in \N}$ be an exhaustion of $\Gw,$ and
			let $K$ be a compact set in $\Omega_{k_0}$. A positive solution $u$ of the equation $Pu=0$ in $\Omega\setminus K$ is said to be {\em of minimal growth
				in a neighborhood of infinity in  $\Gw$}
			if for any $k\geq k_0$ and $v\in  \mathcal{S}_P(\Omega_{k}^{*})\cap C(\overline{\Omega_{k}^{*}})$, the inequality $u\leq v$ on
			$\partial \Omega_k$ implies that $u\leq v$ in $\Omega_{k}^{*}$.}
	\end{defi}
	\begin{rem}
	\em{
		If $u\in \mathcal{C}_P(\Gw\setminus K)$ has minimal growth in  a neighborhood of infinity in $\Gw$,  then for any $v\in \mathcal{S}_P(\Gw \setminus K)$ there exists $C>0$ such that $u \leq C v$ in $\Gw \setminus K_1$, where $K \Subset K_1\Subset \Gw$.
	}
\end{rem}
	\begin{lemma}\cite[Theorem 4.2]{AG}
	Assume that $P\geq 0$ in $\Omega$. Then for any $x_0\in \Omega$ there exists a unique (up to a multiplicative constant) positive solution  $u_{x_0}$ of the equation $Pu=0$
	in $\Omega\setminus\{x_0\}$ of  minimal growth in a neighborhood of infinity in $\Omega$.
\end{lemma}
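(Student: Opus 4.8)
The plan is to realize $u_{x_0}$ as a limit of normalized Green functions of $P$ on an exhaustion, and then to read off both the minimal‑growth property and the uniqueness from the comparison principle. Fix an exhaustion $\{\Omega_k\}_{k\in\N}$ of $\Gw$ with $x_0\in\Omega_1$ and a point $x_1\in\Omega_1\setminus\{x_0\}$. First I would note that $P$ is subcritical in each $\Omega_k$: a positive supersolution of $P$ in $\Gw$ restricts to a positive supersolution in $\Omega_k$ that stays bounded away from $0$ on $\partial\Omega_k$, hence is not proportional to the (boundary‑vanishing) Agmon ground state, so $P$ cannot be critical in $\Omega_k$; thus $P$ admits a positive minimal Green function $G^{\Omega_k}_P(\cdot,\cdot)$ there. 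Set
\[
u_k:=\frac{G^{\Omega_k}_P(\cdot,x_0)}{G^{\Omega_k}_P(x_1,x_0)},
\]
so $u_k$ solves $Pu=0$ and is positive in $\Omega_k\setminus\{x_0\}$, vanishes on $\partial\Omega_k$, and $u_k(x_1)=1$. By the Harnack inequality the family $\{u_k\}$ is bounded above and below, uniformly in $k$, on every compact connected subset of $\Gw\setminus\{x_0\}$ containing $x_1$; interior elliptic estimates then give precompactness in $C^{1,\alpha}_{\loc}(\Gw\setminus\{x_0\})$, and a diagonal/Arzel\`a--Ascoli argument yields a subsequence $u_{k_j}\to u_{x_0}$ locally uniformly in $\Gw\setminus\{x_0\}$, with $u_{x_0}>0$, $Pu_{x_0}=0$ in $\Gw\setminus\{x_0\}$, and $u_{x_0}(x_1)=1$. (When $n=1$ one normalizes at a point in each component of $\Gw\setminus\{x_0\}$.)

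\emph{Minimal growth of $u_{x_0}$.} Let $k_0$ satisfy $x_0\in\Omega_{k_0}$, and let $k\ge k_0$ and $v\in\mathcal{S}_P(\Omega_k^*)\cap C(\overline{\Omega_k^*})$ with $u_{x_0}\le v$ on $\partial\Omega_k$. Fix $\delta\in(0,1)$. Since $u_{k_j}\to u_{x_0}$ uniformly on the compact set $\partial\Omega_k$ and $v>0$ there, one has $(1-\delta)u_{k_j}\le v$ on $\partial\Omega_k$ for all large $j$, while $(1-\delta)u_{k_j}=0\le v$ on $\partial\Omega_{k_j}$. As $(1-\delta)u_{k_j}$ is a subsolution and $v$ a supersolution in the annulus $\Omega_{k_j}\setminus\overline{\Omega_k}$, the comparison principle --- valid because $P$ is subcritical in $\Omega_{k_j}$ --- gives $(1-\delta)u_{k_j}\le v$ there; letting $j\to\infty$ and then $\delta\to0$ yields $u_{x_0}\le v$ in $\Omega_k^*$. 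This is exactly minimal growth in a neighborhood of infinity in $\Gw$.

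\emph{Uniqueness.} Suppose $v$ is another positive solution of $Pu=0$ in $\Gw\setminus\{x_0\}$ of minimal growth at infinity; I would show $v=cu_{x_0}$. Pick $k_0$ (with $x_0\in\Omega_{k_0}$) for which the minimal‑growth condition holds for both $u_{x_0}$ and $v$. Comparing on the compact set $\partial\Omega_{k_0}$ and invoking minimal growth in each direction gives $u_{x_0}\asymp v$ in $\Omega_{k_0}^*$, so $\tau:=\inf_{\Omega_{k_0}^*}(u_{x_0}/v)\in(0,\infty)$; put $w:=u_{x_0}-\tau v$, a solution of $Pw=0$ in $\Gw\setminus\{x_0\}$ with $0\le w\le u_{x_0}$ in $\Omega_{k_0}^*$. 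The key auxiliary fact I would use is that a solution sandwiched between $0$ and a minimal‑growth solution $u$ is itself of minimal growth: if $\psi\in\mathcal{S}_P(\Omega_k^*)\cap C(\overline{\Omega_k^*})$ with $w\le\psi$ on $\partial\Omega_k$, then $(u-w)+\psi$ is a positive supersolution in $\Omega_k^*$ dominating $u$ on $\partial\Omega_k$, so $u\le(u-w)+\psi$ in $\Omega_k^*$ by minimal growth of $u$, i.e.\ $w\le\psi$ there. Now if $w\not\equiv0$ then, by the strong maximum principle, $w>0$ in $\Omega_{k_0}^*$, hence $w$ is a positive minimal‑growth solution and $w\asymp u_{x_0}$ in $\Omega_{k_0}^*$; thus $w\ge\eps u_{x_0}$ there for some $\eps\in(0,1)$, which forces $u_{x_0}/v\ge\tau/(1-\eps)>\tau$ throughout $\Omega_{k_0}^*$, contradicting the choice of $\tau$. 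Therefore $w\equiv0$ in $\Omega_{k_0}^*$, and unique continuation gives $u_{x_0}=\tau v$ in $\Gw\setminus\{x_0\}$.

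\emph{What I expect to be the main obstacle.} Existence is routine once the approximating Green functions are available, i.e.\ once $P$ is seen to be subcritical on each $\Omega_k$. The delicate part is uniqueness: keeping the chain of minimal‑growth comparisons straight, and in particular the observation that minimal growth is inherited by dominated solutions --- this is precisely what lets the ``maximal constant'' $\tau$ be pushed to an equality. A secondary point is that for $n=1$, and more generally when $\Gw\setminus\{x_0\}$ has several ends, the strong‑maximum‑principle step must be run end by end and then tied together by unique continuation in $\Gw\setminus\{x_0\}$; in the one‑dimensional case one should also understand $u_{x_0}$ with the convention that it is continuous at $x_0$, so that it is, up to a constant, the minimal Green function with pole $x_0$ when $P$ is subcritical in $\Gw$, and the Agmon ground state when $P$ is critical.
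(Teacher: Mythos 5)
The paper does not prove this lemma; it cites it from Agmon's lecture notes ([AG, Theorem~4.2]), so there is no internal argument to compare against, and I'll evaluate your proof on its own terms. Your existence construction via Green functions on an exhaustion, the passage to a limit, the verification that $u_{x_0}$ has minimal growth, and the auxiliary observation that a solution $w$ with $0\le w\le u$ in $\Omega_{k_0}^{*}$ inherits minimal growth from $u$, are all sound.

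The uniqueness argument, however, has a genuine gap at the step ``$w\asymp u_{x_0}$ in $\Omega_{k_0}^{*}$.'' To compare two minimal-growth solutions on $\Omega_{k_0}^{*}$ you compare them on the compact boundary $\partial\Omega_{k_0}$ and push outward, but this requires both to be strictly positive on $\partial\Omega_{k_0}$. Here $w=u_{x_0}-\tau v$ with $\tau=\inf_{\Omega_{k_0}^{*}}(u_{x_0}/v)$, and nothing prevents this infimum from being approached as $x\to\partial\Omega_{k_0}$, in which case $w$ vanishes somewhere on $\partial\Omega_{k_0}$ and $\max_{\partial\Omega_{k_0}}(u_{x_0}/w)=\infty$. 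The strong maximum principle cannot be invoked at such a boundary point either, since you only know $w\ge 0$ on one side of $\partial\Omega_{k_0}$. One can only deduce $w\ge\eps\,u_{x_0}$ on $\Omega_{k_1}^{*}$ for some $k_1>k_0$, which gives $u_{x_0}/v\ge\tau/(1-\eps)$ there but leaves the annulus $\Omega_{k_1}\setminus\overline{\Omega_{k_0}}$ --- which enters the infimum defining $\tau$ --- uncontrolled, so no contradiction is obtained. The standard repair is to control $u_{x_0}/v$ all the way in to $x_0$: show that in the subcritical case every minimal-growth positive solution in $\Gw\setminus\{x_0\}$ has a nonremovable singularity at $x_0$ (a removable one would produce a global minimal-growth solution, i.e.\ a ground state, contradicting subcriticality), so that $u_{x_0}/v$ extends continuously and positively to $x_0$; and that in the critical case every such solution has a removable singularity (a pole would produce a positive supersolution that is not a solution, contradicting criticality), reducing matters to uniqueness of the ground state. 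Then $\tau$ can be taken as the infimum over all of $\Gw\setminus\{x_0\}$, the vanishing point of $w$ lies in the interior of the connected set $\Gw\setminus\{x_0\}$, and the strong maximum principle alone gives $w\equiv 0$ --- which also makes the unique-continuation step you invoke at the end (problematic for merely measurable $A$) unnecessary.
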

	\begin{defi}{\em
			Assume that $P\geq 0$ in $\Omega$.
			\begin{itemize}
				\item If there exists $\phi \in \mathcal{C}_P(\Gw)$ of minimal growth in a neighborhood of infinity in $\Gw$, then $\gf$ is said to be a {\em (Agmon) ground state} of $P$ in $\Gw$.
				\item 
				 If the unique (up to a multiplicative constant) positive solution $u_{x_0}$ in $\Omega\setminus\{x_0\}$ of  minimal growth in a neighborhood of infinity in $\Omega$ has a irremovable singularity at $x_0$, then $G_P^\Gw(x,x_0):=\ga u_{x_0}(x)$, where  	
				  $\alpha$ is chosen such that $PG_{P}^{\Gw}(x,x_0)$ equals to the Dirac measure at $x_0$, is called the {\em positive minimal Green function} of $P$ in $\Gw$ with singularity at $x_0$ \cite{Ancona01,P3}.
			\end{itemize}		
		}
	\end{defi}
	\begin{theorem}[{\cite{AG} and \cite[Section 2.1]{DFP}}]\label{thm_crit}
		Assume that $P\geq 0$ in $\Gw$. The following assertions are equivalent:
			
			(a) $P$ is critical in $\Gw$ (meaning that $P$ does not admit a Hardy-weight).
			
			(b) $P$ admits a unique (up to a multiplicative constant) positive supersolution in $\Gw$.
			
			(c) $P$ admits a ground state.
			
			Moreover,  $P$ is subcritical in any subdomain $\Gw_1 \subsetneqq \Gw$. 
	\end{theorem}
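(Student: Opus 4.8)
The strategy is to prove the cycle $(a)\Leftrightarrow(b)$, then $(c)\Rightarrow(b)$, then $(a)\Rightarrow(c)$, and to deduce the \emph{Moreover} part from the resulting equivalence. I would obtain $(a)\Leftrightarrow(b)$ by elementary arguments with supersolutions. If $P$ is subcritical it admits a Hardy-weight $W$ with $P-W\geq0$; choosing $w\in\mathcal{S}_{P-W}(\Gw)$ gives $Pw\geq Ww\gneqq0$, so $w$ is a positive supersolution of $P$ which is not a solution, and together with a positive solution of $Pu=0$ (which exists because $P\geq0$) it provides two linearly independent elements of $\mathcal{S}_P(\Gw)$; hence $(b)$ fails. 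Conversely, if $\mathcal{S}_P(\Gw)$ contains linearly independent $u_1,u_2$, then either one of them is not a solution, or both are solutions and then, since $u_1/u_2$ is nonconstant, a minimum $w:=\min(u_1,tu_2)$ that genuinely crosses (for a suitable $t>0$) is a positive supersolution which is not a solution, by the strong maximum principle. In either case $W:=Pw/w$ (understood via the associated bilinear form) is a nonzero nonnegative Hardy-weight for $P$, with $w$ a positive supersolution of $P-W$; so $P$ is subcritical and $(a)$ fails.

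For $(c)\Rightarrow(b)$, let $\phi$ be a ground state and let $v\in\mathcal{S}_P(\Gw)$ be arbitrary. By the minimal-growth property of $\phi$ (see the Remark following the definition of minimal growth), $\phi\leq Cv$ in $\Gw\setminus K$ for some $C>0$ and $K\Subset\Gw$, and since $v/\phi$ is continuous and positive on $\overline{K}$, the number $m:=\inf_{\Gw}(v/\phi)$ is strictly positive. Then $w:=v-m\phi\geq0$ is a nonnegative supersolution ($Pw=Pv\geq0$) with $\inf_{\Gw}(w/\phi)=0$: if $w$ vanished at an interior point it would vanish identically by the strong maximum principle, giving $v=m\phi$; and $w$ cannot be strictly positive, since the minimal-growth property applied to $w$ would then force $\inf_{\Gw}(w/\phi)>0$. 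Thus $v=m\phi$, so the positive supersolution is unique; combined with $(b)\Rightarrow(a)$ this also gives $(c)\Rightarrow(a)$.

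The substantive step is $(a)\Rightarrow(c)$, for which I would use the classical exhaustion construction. Fix an exhaustion $\{\Gw_k\}$ of $\Gw$ and points $x_0\in\Gw_1$ and $x_1\ne x_0$. For each $k$ the Green function $G_k:=G_P^{\Gw_k}(\cdot,x_0)$ exists (standard on a bounded smooth domain) and, extended by zero outside $\Gw_k$, is nondecreasing in $k$ by the maximum principle. If $\sup_kG_k(x_1)<\infty$, then $G_k\uparrow G_P^{\Gw}(\cdot,x_0)<\infty$, so $P$ possesses a positive minimal Green function and is subcritical, contradicting $(a)$; hence $G_k(x_1)\to\infty$. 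Setting $\phi_k:=G_k/G_k(x_1)$, the Harnack inequality away from $x_0$ together with interior elliptic estimates yields a subsequence converging locally uniformly in $\Gw$ to a positive solution $\phi$ of $Pu=0$ — the point source disappears in the limit since $P\phi_k=G_k(x_1)^{-1}\delta_{x_0}\to0$ — and, because each $G_k$ is dominated on $\Gw_j^{*}$ (for $j<k$) by any positive supersolution that dominates it on $\partial\Gw_j$, the limit $\phi$ inherits minimal growth in a neighborhood of infinity in $\Gw$, i.e.\ $\phi$ is a ground state. I expect this passage to the limit to be the main obstacle: one must rule out degeneration of $\{\phi_k\}$, control the behaviour near $x_0$, and transfer the minimal-growth property to the limit, which is exactly where the Harnack-chain and generalized-maximum-principle machinery of \cite{AG} is needed.

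Finally, for the \emph{Moreover} part, suppose $\Gw_1\subsetneq\Gw$ is a subdomain and, for contradiction, that $P$ is critical in $\Gw_1$. Fix a positive solution $\phi$ of $Pu=0$ in $\Gw$. By the equivalence applied in $\Gw_1$, $\phi|_{\Gw_1}$ is, up to a constant, the unique positive supersolution of $P$ in $\Gw_1$ and is a ground state there, so it has minimal growth near the ideal boundary of $\Gw_1$, which — since $\Gw$ is connected — includes the nonempty relatively open portion $\partial\Gw_1\cap\Gw$. Fixing $\bar z\in\partial\Gw_1\cap\Gw$ and a small ball $B\Subset\Gw$ centered at $\bar z$, one constructs in $B\cap\Gw_1$ a positive supersolution of $P$ that vanishes continuously on $\partial\Gw_1\cap B$, and, gluing it (suitably scaled) to a multiple of a fixed global supersolution, obtains for large $k$ a positive supersolution on $\Gw_1\setminus\overline{\Gw_1^{(k)}}$ dominating $\phi$ on $\partial\Gw_1^{(k)}$; minimal growth then forces $\phi$ to be dominated near $\bar z$ by a function tending to $0$, whereas $\phi$ stays bounded below there — a contradiction. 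Hence $P$ is subcritical in $\Gw_1$.
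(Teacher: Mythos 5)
The paper does not prove Theorem~\ref{thm_crit} at all: it is quoted verbatim from \cite{AG} and \cite[Section~2.1]{DFP}, and used as a black box throughout. So your argument cannot be compared against a proof in this text; it is a blind reconstruction of a classical fact. That said, the cycle you run for the equivalence $(a)\Leftrightarrow(b)\Leftrightarrow(c)$ is essentially the standard Agmon/Pinchover route (two independent positive supersolutions force subcriticality; uniqueness of the supersolution via the minimal-growth comparison $w=v-m\phi$; exhaustion and normalized Green functions to produce a ground state), and the technical points you flag in the $(a)\Rightarrow(c)$ limit passage are exactly the ones that require the Harnack-chain machinery. Those parts are sound as a sketch.

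The ``Moreover'' part, however, has a genuine gap. You assert that $\partial\Gw_1\cap\Gw$ is a \emph{nonempty relatively open} subset of $\Gw$, and the construction that follows (a positive supersolution on $B\cap\Gw_1$ vanishing continuously on $\partial\Gw_1\cap B$, glued to a global one) depends on $\partial\Gw_1\cap B$ being a boundary piece on which the Dirichlet problem is well posed. Connectedness of $\Gw$ only gives $\partial\Gw_1\cap\Gw\neq\emptyset$; it is not relatively open in general. The most important counterexample is $\Gw_1=\Gw\setminus\{x_0\}$, where $\partial\Gw_1\cap\Gw=\{x_0\}$ is a single (polar) point. In that case there is \emph{no} positive supersolution of $P$ in $B\setminus\{x_0\}$ that tends to $0$ at $x_0$: a positive supersolution in a punctured ball is either bounded near the puncture, in which case it extends and by the strong maximum principle stays bounded away from zero, or it blows up. So the whole construction collapses precisely in the case the rest of the paper relies on (this is how $G_P^\Gw(\cdot,x_0)$ is defined). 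The standard repair is the Green-function/exhaustion argument in $\Gw_1$ itself: one shows $G_P^{\Gw_{1,k}}(\cdot,y_0)$ stays locally bounded as $k\to\infty$ by comparing with $C\phi$ on a fixed small sphere about $y_0$ --- and the comparison constant can be taken uniform only because $\Gw_1\subsetneq\Gw$ forces the exhausting boundaries to stay away from part of $\Gw$, which is where properness of $\Gw_1$ actually enters. Your sketch does not isolate this mechanism.
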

	\begin{remark}
		\em{
	Criticality theory has been extended to the case of half-linear operators of Schr\"odinger-type
	\begin{equation}\label{QPV}
	Q_{A,p,V}'(u):=-\mathrm{div}(|\nabla u|_A^{p-2}\nabla u) +V(x)|u|^{p-2}u,
	\end{equation}
	where $1<p<\infty$ \cite{PP,PT}, and to the case of Schr\"odinger operators on discrete graphs \cite{KPP1}.
}
	\end{remark}	
\begin{definition}{\em 
Consider an elliptic operator $P$ on $\Gw$ either of the form \eqref{eq:div_form_linear} or \eqref{eq:non_div_form_linear}. The {\em generalized principal eigenvalue} of $P$ in $\Gw$ with respect to a potential $W\geq 0$ is defined by
	\begin{equation*}
	\lambda_0(P,W,\Omega):=\sup\{\lambda \in \mathbb{R}\mid  P-\lambda W \geq 0 \mbox{ in } \Gw\}.
	\end{equation*}
	We also denote
	\begin{equation*}
	\lambda_{\infty}(P,W,\Omega):=\sup\{\lambda \in \mathbb{R}\mid   \exists K\Subset \Gw ~ \text{ s.t. } P-\lambda W \geq 0 \mbox{ in } \Gw \setminus K\}.
	\end{equation*}
	}
\end{definition}
	We might write $\lambda_0:=\lambda_0(P,W,\Omega)$ and 
	$\lambda_{\infty}:=\lambda_{\infty}(P,W,\Omega)$ when there is no danger of ambiguity. 
	
	Note that by the AP theorem,  if $P$ is symmetric, then $\lambda_0(P,W,\Omega)$ is the best constant $\gl$ satisfying the functional inequality
	$$Q(\varphi)\geq \gl\int_{\Omega} W\varphi^2\dx\qquad \forall \varphi \in \core.$$
	Moreover, assume further that $W$ is strictly positive, then $\gl_0$ and $\gl_\infty$ are the bottom of the $L^2(\Gw,W\!\dx)$-spectrum and essential spectrum of $P$ (see for example \cite{Agmon}).
	 
	 \begin{defi}{\em 
	 		Let $D\Subset \Gw$ be a subdomain of $\Gw\subset \R^n.$ We say that the \textit{generalized maximum principle} for the operator $P$ holds in $D$ if  for any  $u\in  C(\bar D)$ satisfying $Pu\geq 0$ in $D$ and $u \geq 0$ on $\partial D$, we have $u \geq 0$ on $D$.
	 	}
	 \end{defi}
	 
	 It is well known \cite{AG} that $\lambda_0(P,\Gw,1)\geq 0$ if and only if the generalized maximum principle holds true in any  subdomain $D \Subset \Gw$.
\section{Optimal Hardy-inequalities for Sturm-Liouville operators}  \label{sec-1d}	 
In the one-dimensional case, optimal Hardy-weights for a general subcritical Sturm-Liouville operator on an interval $(a,b)$ are characterized  by the following integral conditions:
\begin{proposition}\label{prop_2.10}
	Let $\infty\leq a<b\leq \infty$.  Consider the Sturm-Liouville operator 
	$$
	L(y):=-(py')' + qy \qquad \mbox{on } (a,b),
	$$
	where $q\in L^1_{\mathrm{loc}}(a,b)$, $p\in C^{1,\alpha}(a,b)$, $p>0$.
	
	Then $0\lneqq w\in L^1_{\mathrm{loc}}(a,b) $ is an optimal Hardy-weight for $L$ in $(a,b)$  if and only if  
	there exists a positive function $f_w$ satisfying $(L-w)f_w=0$ in $(a,b)$ such that 
	\begin{equation}\label{eq:2.5}
	\int\limits_a^{c} \frac{1}{pf_w^2} \dt=
	\int\limits_{c}^{b} \frac{1}{pf_w^2} \dt= 
	\int\limits_a^{c} wf_w^2 \dt=
	\int\limits_{c}^{b} wf_w^2 \dt=
	\infty,
	\end{equation}
	for any $a<c < b$. 
\end{proposition}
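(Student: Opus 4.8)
The plan is to reduce the proposition to two standard facts of one-dimensional criticality theory and then to match the abstract notion of an optimal Hardy-weight---criticality of $L-w$ together with null-criticality at every end---against the four integral conditions in \eqref{eq:2.5}. The first fact is the one-dimensional criticality criterion. Set $M:=L-w$, so $My=-(py')'+(q-w)y$, assume $M\geq 0$ on $(a,b)$, and let $f>0$ solve $Mf=0$. By reduction of order every solution of $My=0$ is a linear combination of $f$ and $f\int_{c}^{x}\frac{1}{pf^2}\dt$, since the Wronskian $p(fy'-f'y)$ is constant; and $f$ is a ground state of $M$---equivalently, $M$ is critical and its cone of positive solutions is spanned by $f$---if and only if $\int_{a}^{c}\frac{1}{pf^2}\dt=\int_{c}^{b}\frac{1}{pf^2}\dt=\infty$. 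Indeed, if $\int^{b}\frac{1}{pf^2}\dt<\infty$ then $\psi(x):=f(x)\int_{x}^{b}\frac{1}{pf^2}\dt$ is a positive solution near $b$ with $\psi/f\to 0$ as $x\to b$, so $f$ is not of minimal growth at the $b$-end; if $\int^{b}\frac{1}{pf^2}\dt=\infty$ then, for any positive supersolution $v$ near $b$, the ground-state transform $h:=v/f$ makes $pf^2h'$ non-increasing, and a short integration shows that $h\geq 1$ on the inner boundary of an exhausting interval forces $h\geq 1$ up to $b$, so $f$ is of minimal growth at the $b$-end; the $a$-end is symmetric, and by Theorem~\ref{thm_crit} $M$ is critical exactly when it has a ground state, that is, a positive solution of minimal growth at both ends.

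The second fact is the one-dimensional form of null-criticality. Since $L-w$ is formally self-adjoint, $(L-w)^\star=L-w$, so when $L-w$ is critical the ground states $\phi$ of $L-w$ and $\phi^\star$ of $(L-w)^\star$ are both positive multiples of the unique positive solution $f_w$ of $(L-w)f_w=0$. The interval $(a,b)$ has exactly two ends, neighborhoods of $a$ and of $b$, so by definition $w$ is an optimal Hardy-weight if and only if $L-w$ is critical and $\phi\phi^\star=f_w^2\notin L^1(E,w\dt)$ for $E$ a neighborhood of $a$ and for $E$ a neighborhood of $b$; that is, if and only if $\int_{a}^{c}wf_w^2\dt=\int_{c}^{b}wf_w^2\dt=\infty$. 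Finally, because $p\in C^{1,\alpha}(a,b)$ is positive and $q,w\in L^1_{\mathrm{loc}}(a,b)$, any solution $f_w$ of $(L-w)f_w=0$ is continuous, positive, and locally bounded away from $0$; hence $1/(pf_w^2)$ is locally bounded and $wf_w^2\in L^1_{\mathrm{loc}}(a,b)$, so each of the four integrals in \eqref{eq:2.5} is infinite for one $c\in(a,b)$ if and only if it is infinite for every $c\in(a,b)$, which is why it suffices to impose the conditions at a single $c$.

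Assembling the two facts gives both implications. If $w$ is optimal, then $L-w$ is critical, so the first fact yields $\int_{a}^{c}\frac{1}{pf_w^2}\dt=\int_{c}^{b}\frac{1}{pf_w^2}\dt=\infty$ for the ground state $f_w$, and the second fact yields $\int_{a}^{c}wf_w^2\dt=\int_{c}^{b}wf_w^2\dt=\infty$. Conversely, suppose $f_w>0$ solves $(L-w)f_w=0$ and the four integrals all diverge. The existence of $f_w$ forces $L-w\geq 0$, while $\int wf_w^2\dt=\infty$ forces $w\not\equiv 0$, so $w$ is a Hardy-weight; divergence of the first two integrals and the first fact make $L-w$ critical with ground state $f_w$; divergence of the last two and the second fact give null-criticality at each end; hence $w$ is optimal.

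The only point that is not a routine unwinding of definitions is the equivalence, used in the first fact, between a positive solution being of minimal growth at an endpoint and the divergence there of $\int\frac{1}{pf^2}\dt$; within it, the delicate implication is that divergence forces minimal growth, which I would obtain from the monotonicity of $pf^2h'$ for the ground-state transform $h=v/f$ together with the generalized maximum principle applied along an exhaustion.
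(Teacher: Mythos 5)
Your proposal is correct and follows essentially the same structure as the paper's proof: unwind the definition of optimal Hardy-weight into criticality plus null-criticality at both ends, then match criticality of $L-w$ against the Wronskian integrals $\int 1/(pf_w^2)$ and null-criticality against the weighted integrals $\int w f_w^2$. The one place you genuinely diverge is in the backward direction for criticality: the paper shows that divergence of $\int_a^c 1/(pf_w^2)$ and $\int_c^b 1/(pf_w^2)$ forces every solution $\alpha f_w + \beta f_w\int_c^t 1/(pf_w^2)\ds$ with $\beta\ne 0$ to change sign, so $f_w$ is the unique positive solution, and then cites Murata's one-dimensional result that uniqueness of the positive solution implies criticality; you instead verify directly that $f_w$ has minimal growth at each end via the ground-state transform $h=v/f_w$ and the monotonicity of $pf_w^2 h'$, then invoke Theorem~\ref{thm_crit}. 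Your route is more self-contained and, as a bonus, spells out the ``divergence implies minimal growth'' step that the paper buries in the citation. Both are valid; the tradeoff is a longer argument in exchange for not relying on the external reference.
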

\begin{proof}
	Assume that $w$ is an optimal Hardy-weight of $L$ and fix
	$a<c < b$.  
	Then $L-w$ is critical and the equation $(L-w)y=0$ in $(a,b)$ admits  a unique positive solution (up to a multiplicative constant) $f_w$.	
	Moreover,
	\begin{equation*}
	\int_{a}^{c}\dfrac{1}{pf_w^2}\ds=
	\int_{c}^{b}\dfrac{1}{pf_w^2} \ds=\infty,
	\end{equation*}
	since otherwise, one of the functions
	\begin{equation*}
	\quad f_w(t)\int\limits_{a}^{t}\dfrac{1}{pf_w^2}\ds, \quad f_w(t)\int\limits_{t}^{b}\dfrac{1}{pf_w^2}\ds,
	\end{equation*}
	would be another linearly independent positive solution of the equation $(L-w)y=0$ in $(a,b)$.
	In addition, the null-criticality of $L-w$ with respect to $w$ just means that for any $a<c<b$ we have
	$$
	\int\limits_a^{c} wf_w^2 \dt=
	\int\limits_{c}^{b} wf_w^2 \dt=
	\infty.
	$$

	\medskip
	
	On the other hand, let $f_w$ be a positive solution of the equation $(L-w)y=0$ in $(a,b)$ satisfying \eqref{eq:2.5}. The general solutions of the equation $(L-w)y=0$ in $(a,b)$ are of the form 
	$$\alpha f_w+\beta f_w \int_{c}^t \dfrac{1}{pf_w^2} \ds, \quad \text{for some} ~ \alpha,\beta \in \R  ~ \text{ and } ~ a<c < b\, .
	$$
	But by \eqref{eq:2.5}, such a solution with $\gb\neq 0$ is negative either near $a$ or near $b$.  
	Hence, $f_w$ is the unique (up to a constant) positive solution of the equation  $(L-w)y=0$ in $(a,b)$. But in the one-dimensional case this implies that $L-w$ is critical in $(a,b)$ \cite[Appendix 1]{MU}.
	Moreover, by definition, \eqref{eq:2.5} implies the null-criticality of $L-w$ with respect to $w$. 
\end{proof}
	 Recall that by Remark~\ref{rem_1.3}, if $W$ is an optimal Hardy-weight for $P$ in $\Gw$, then  $W$ is also optimal at infinity in $\Gw$. In other words, in this case we have $\lambda_{\infty}(P,W,\Omega)=1$. For the one-dimensional case we give a simple alternative proof for this assertion.
	 \begin{proposition} \label{2.14}
	 	Consider a nonnegative  Sturm-Liouville operator $$L(y):=-(py')' + qy \qquad \mbox{on } (a,b),$$ 
	 	where $q\in L^1_{\mathrm{loc}}(a,b)$, $p\in C^{1,\alpha}(a,b)$, $p>0$ in $(a,b)$. If  $w\in L^1_{\mathrm{loc}}(a,b)$ is an optimal Hardy-weight for $L$ in $(a,b)$, then $\lambda_\infty(L,w,(a,b))=1$.
	 \end{proposition}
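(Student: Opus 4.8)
\emph{Proof plan.} The bound $\lambda_\infty(L,w,(a,b))\ge 1$ is essentially free: since $w$ is in particular a Hardy-weight, $L-w\ge 0$ in all of $(a,b)$, and restricting a positive supersolution to a subdomain shows $\lambda_\infty(L,w,(a,b))\ge \lambda_0(L,w,(a,b))\ge 1$. So the content is the reverse inequality, which I would prove by contradiction. Assume $\lambda_\infty>1$ and fix $\lambda>1$ together with a compact $K\Subset(a,b)$ such that $L-\lambda w\ge 0$ in $(a,b)\setminus K$. Since $K$ is compact in the open interval, $\sup K<b$, so $(c,b)\subset(a,b)\setminus K$ for some $a<c<b$, and hence $L-\lambda w\ge 0$ in $(c,b)$. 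As a one-dimensional operator is symmetric, the AP theorem upgrades the given positive supersolution to a genuine positive solution $u$ of $(L-\lambda w)u=0$ in $(c,b)$, and elliptic regularity gives $pu'\in\mathrm{AC}_{\mathrm{loc}}(c,b)$. I also fix the ground state $f_w$ of $L-w$ furnished by Proposition~\ref{prop_2.10}; the facts I need are $f_w>0$, $pf_w'\in\mathrm{AC}_{\mathrm{loc}}(c,b)$, and the divergences
\[
\int_{c}^{b}\frac{1}{pf_w^2}\dt=\int_{c}^{b}wf_w^2\dt=\infty,
\]
which are part of \eqref{eq:2.5} read at the end $b$.

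The engine of the argument is the Wronskian $\mathcal{W}(t):=p\big(u'f_w-uf_w'\big)$ on $(c,b)$. Using $(pu')'=(q-\lambda w)u$ and $(pf_w')'=(q-w)f_w$, a direct computation (the cross terms in $\mathcal{W}'=(pu')'f_w-u(pf_w')'$ cancel) gives $\mathcal{W}'=(1-\lambda)\,wuf_w\le 0$ a.e., since $\lambda>1$ and $w\ge 0$, $u,f_w>0$; thus $\mathcal{W}$ is nonincreasing, while $\mathcal{W}=pf_w^2\,(u/f_w)'$. Now I split on the sign of $\mathcal{W}$. If $\mathcal{W}(d)<0$ for some $d\in(c,b)$, then for $t\ge d$ one has $(u/f_w)'(t)\le \mathcal{W}(d)/(pf_w^2)(t)$, and integrating on $[d,b)$ against $\int_d^b (pf_w^2)^{-1}\dt=\infty$ forces $u/f_w\to-\infty$ near $b$, which is impossible since $u,f_w>0$. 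Hence $\mathcal{W}\ge 0$ throughout $(c,b)$, so $u/f_w$ is nondecreasing and $u\ge\kappa f_w$ on $[d,b)$ with $\kappa:=(u/f_w)(d)>0$. Integrating $\mathcal{W}'=(1-\lambda)wuf_w$ from $d$ to $T$ and using $\mathcal{W}(T)\ge 0$ gives $(\lambda-1)\int_d^T wuf_w\dt=\mathcal{W}(d)-\mathcal{W}(T)\le\mathcal{W}(d)<\infty$, whence $\int_d^b wuf_w\dt<\infty$; but $\int_d^b wuf_w\dt\ge\kappa\int_d^b wf_w^2\dt=\infty$ by \eqref{eq:2.5}. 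This contradiction yields $\lambda_\infty\le 1$, hence $\lambda_\infty=1$.

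The computation itself is elementary; the points that need care are the regularity bookkeeping — justifying that $\mathcal{W}$ is locally absolutely continuous with $\mathcal{W}'=(pu')'f_w-u(pf_w')'$, which is exactly where $pu',pf_w'\in\mathrm{AC}_{\mathrm{loc}}$ and the passage from a mere positive supersolution to an honest positive solution of $(L-\lambda w)u=0$ via the AP theorem enter — and, secondly, making sure the divergences of $\int 1/(pf_w^2)$ and of $\int wf_w^2$ that we invoke are the ones attached to the end $b$, not $a$. I expect the main (if modest) obstacle to be precisely this last reduction: from ``$L-\lambda w\ge 0$ off a compact set'' one must extract the one-sided picture on a half-neighborhood $(c,b)$ of a single end, on which Proposition~\ref{prop_2.10} supplies the required blow-up; once that is in place, the Wronskian dichotomy closes the argument. (The end $a$ is handled identically, but one end suffices for the contradiction.)
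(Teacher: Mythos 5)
Your proof is correct, and it takes a genuinely different route from the paper's. The paper proves the contradiction by invoking an oscillation criterion (Zettl, Theorem~6.4.1): from $\int_c^b (pf_w^2)^{-1}\dt=\infty$ and $\int_c^b f_w(-Mf_w)\dt=\infty$, that theorem asserts $My=0$ is oscillatory near $b$, which violates the generalized maximum principle. You instead make the argument self-contained via the Wronskian $\mathcal W=p(u'f_w-uf_w')$ of the ground state $f_w$ of $L-w$ against a positive solution $u$ of $(L-\lambda w)u=0$, computing $\mathcal W'=(1-\lambda)wuf_w\le 0$ and running a clean dichotomy: if $\mathcal W$ ever goes negative, the divergence $\int^b(pf_w^2)^{-1}=\infty$ drives $u/f_w\to-\infty$; if $\mathcal W\ge 0$ throughout, monotonicity of $u/f_w$ plus $\int^b wf_w^2=\infty$ contradicts $\int^b wuf_w<\infty$. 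In effect you re-derive exactly the piece of Sturm oscillation theory the paper cites, at the cost of a few extra lines and the (correct) bookkeeping that passing from a positive supersolution of $L-\lambda w$ to a positive solution on $(c,b)$ is legitimate in the one-dimensional symmetric setting, and that $u,f_w,pu',pf_w'$ are locally absolutely continuous so the Wronskian identity holds a.e. The paper's proof is shorter but leans on an external reference; yours is more elementary and makes visible precisely which two divergences from \eqref{eq:2.5} are doing the work and at which end. Both are valid; you also explicitly supply the trivial bound $\lambda_\infty\ge 1$, which the paper leaves implicit.
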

	 \begin{proof}
	 	Denote by  $f_w$ the ground state of $L-w$ in $(a,b)$. The criticality of $L-w$ in $(a,b)$ implies that for a fixed $a<c<b$ we have 
	 	$$\int_a^c \dfrac{1}{pf_w^2} \dt=\int_c^b \dfrac{1}{pf_w^2} \dt=\infty.$$
	 	Assume by contradiction that there exists $\lambda>0$ such that
	 	$M:=L-(1+\gl)w \geq 0$ in $(a,b) \setminus K$ for some $K\Subset (a,b)$. Note that $-Mf_w=\gl wf_w$. Therefore, 
	 	the null-criticality with respect to $w$ of $L-w$ in $(a,b)$ implies that 
	 	$$\min\Big\{\!\int_a^c\!\!f_w(-Mf_w) \dt,\int_c^{b} \!\!f_w (-Mf_w) \dt\!\Big\}= \lambda\min\Big\{\!\!\int_a^c\!\!wf_w^2\dt,\int_c^{b} \!\!wf_w^2 \dt\! \Big\}= \infty.$$ By \cite[Theorem 6.4.1]{Z1}, the equation 
	 	$My=0$ has oscillatory solutions both near $a$ and near $b$. Since the existence of an oscillatory solution contradicts the generalized maximum principle, it follows that $M\not\geq 0$ in $(a,b) \setminus K$, but this contradicts our assumption.	
	 \end{proof}
	\section{ Ermakov-Pinney equation and one-dimensional Hardy inequalities}\label{sec_EP_eq}
	In the present section we introduce a new approach to construct Hardy-type inequalities in one-dimension. We  exploit the explicit  general solutions of the well known Ermakov-Pinney semilinear ordinary differential equation, to obtain Hardy-type inequalities for one-dimensional Schr\"odinger operators. Moreover, this method gives rise to an infinite converging series of Hardy-weights in the spirit of \cite{BFT}.
	
	Consider the classical one-dimensional Hardy's inequality
	$$
	\int_0^\infty |\phi'(t)|^2\dt \geq \int_0^\infty \dfrac{|\phi(t)|^2}{4t^2}\dt  \qquad  \forall  \phi\in C_0^\infty(\R_{+}).
	$$ 
	It is known that $w(t):=1/(4t^2)$ is an optimal Hardy-weight for the Laplacian on $\R_+$ with a ground state $f_0(t):=\sqrt{2t}$. \cite{DFP}. Notice the functional  relation between $w$ and the corresponding ground state
	$f_0(t)=\sqrt{2t}=w(t)^{-1/4}$. Equivalently, the ground state $f_0$ solves the Ermakov-Pinney semilinear differential equation \cite{E,PI}
	$$
	-y''=\frac{1}{y^3}  \qquad \mbox{in } (0,\infty).
	$$ 
	Motivating by the above observation, we use the Ermakov-Pinney equation  to construct new 
	{\em strictly positive} Hardy-weights.
		
	Let $q\in L^1_{\text{loc}}(\R_{+})$, and $k>0.$
	By \cite{E,H,PI}, a general positive solution to the Ermakov-Pinney equation
	\begin{equation}\label{Pinney equation}
	-y''+qy=\dfrac{k}{y^3} \qquad \mbox{in } (a,b) , 
	\end{equation}
	is given by
	$$
	y=\sqrt{\left|c_1 v_1^2 +c_2 v_2^2 + 2c_3 v_1 v_2\right|}\, ,
	$$
	where $c_i, i=1,2,3$ are fixed real numbers
	such that $c_3^2-c_1c_2=k$, and $v_i$, $i=1,2$ satisfy the linear Schr\"odinger equation   
	$$
	-u'' +q u=0, \quad \mbox{ such that }\;\;  W(v_1,v_2):= v_1v_2'-v_2v_1'=1 \quad \mbox{in } (a,b).
	$$
	Here $(a,b)$ is  any subinterval of the sets
	\begin{align*}	
&\left 	\{t\in (a,b) \mid \dfrac{v_1(t)}{v_2(t)}\neq \dfrac{-c_3 \pm \sqrt{k}}{c_1}  \right \} & \mbox{if } c_1\neq 0, \\
&\left 	\{t \in (a,b)\mid \dfrac{v_2(t)}{v_1(t)}\neq \dfrac{-c_3 \pm \sqrt{k}}{c_2} \right \} & \mbox{if } c_1=0 ~ \text{and} ~ c_2 \neq 0.
 	\end{align*}

	\begin{lem}\label{optimal pinney} \label{2.15}
		
			Let $q\in L^1_{\loc}(\R_{+})$. Suppose that the equation 
			$$L(y)=-y''+qy=0$$ admits two linearly independent positive  solutions, $v_1$ and $v_2$,  in $(0,b)$ for some $b>0$  such that
			 $v_1$ has minimal growth near $0$, and  $W(v_1,v_2)=v_1'v_2-v_1v_2'=1$.
		Set $$f_w:=\sqrt{\left|c_1 v_1^2 +c_2 v_2^2 + 2c_3 v_1 v_2\right|} ,$$ 
		where $c_i\in \R$ satisfy $c_3^2-c_1c_2=1$.
			Then there exists $0<m \leq b$ such that
			$f_w$ is a (well-defined) positive solution of the equation  
			$$(L-w)y=0 \qquad  \mbox{in } (0,m), $$
			where 
					$$
			w:=\frac{L(f_w)}{f_w}=\dfrac{1}{f_w^4}>0\qquad  \mbox{in } (0,m).
			$$
			 Moreover, for any $0<\varepsilon<m$ 
			 $$  \int_0^{m-\vge} \frac{1}{f_w^2} \dt=
			 \int_0^{m-\vge} w f_w^2 \dt=\infty.$$
	\end{lem}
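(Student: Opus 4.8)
The plan is to recognise $f_w$ through the explicit Ermakov--Pinney correspondence recalled above, which makes the nonlinear identity $Lf_w=f_w^{-3}$ automatic, and then to read the integral identities off the boundary behaviour of $v_1,v_2$ at $0$. Write $g:=c_1v_1^2+c_2v_2^2+2c_3v_1v_2$, so that $f_w=\sqrt{|g|}$. Since $c_3^2-c_1c_2=1>0$, the binary quadratic form $(x,y)\mapsto c_1x^2+c_2y^2+2c_3xy$ is nondegenerate and indefinite, hence splits over $\R$; therefore $g=u_1u_2$, where $u_1,u_2$ are real linear combinations of $v_1,v_2$ --- so solutions of $Ly=0$ --- with $W(u_1,u_2)=\pm2\neq0$, and in particular linearly independent. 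On any open subinterval $I\subset(0,b)$ on which $g$ does not vanish, the quoted formula for the general positive solution of $-y''+qy=k/y^3$, applied with $k=c_3^2-c_1c_2=1$, shows that $f_w=\sqrt{|g|}$ is a positive $C^2$ solution of $-y''+qy=y^{-3}$ on $I$; equivalently $Lf_w=f_w^{-3}$ there, so that $w:=Lf_w/f_w=f_w^{-4}>0$ and $(L-w)f_w=0$ on $I$.

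Next I fix the interval $(0,m)$. Since $L$ admits a positive solution $\phi$ on $(0,b)$, for any solution $\psi$ the quantity $\phi^2(\psi/\phi)'$ equals the (constant) Wronskian, so $\psi/\phi$ is monotone and hence $\psi$ has at most one zero in $(0,b)$; consequently $g=u_1u_2$ has at most two zeros in $(0,b)$. The hypothesis that $v_1$ has minimal growth near $0$ means that $v_1$ is the principal solution of $L$ at $0$, i.e.\ $\int_0^c v_1^{-2}\dt=\infty$ for $0<c<b$; since $W(v_1,v_2)=v_1'v_2-v_1v_2'=1$ gives $(v_2/v_1)'=-v_1^{-2}$, this forces $v_2/v_1\to+\infty$, that is $v_1(t)/v_2(t)\to0$ as $t\to0^+$. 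Hence any nonzero combination $\alpha v_1+\beta v_2$ is, near $0$, asymptotic to $\beta v_2$ if $\beta\neq0$ and equal to $\alpha v_1$ if $\beta=0$, so it is nonvanishing on some right-neighbourhood of $0$; in particular $g=u_1u_2\neq0$ there. Let $m\in(0,b]$ be the smallest zero of $g$ in $(0,b)$, and $m:=b$ if $g$ has none. Then $g$ has constant sign on $(0,m)$ and, by the previous paragraph, $f_w=\sqrt{|g|}$ is a well-defined positive solution of $(L-w)y=0$ on $(0,m)$ with $w=f_w^{-4}>0$ there.

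It remains to prove the integral identities. Because $wf_w^2=f_w^{-2}$, the two integrals coincide, and since $f_w$ is positive and continuous on $[\delta,m-\vge]$ for every small $\delta>0$, the divergence can only come from $t\to0^+$; thus it suffices to show $\int_0^c f_w^{-2}\dt=\infty$ for $c<m$. Writing $g=v_2^2\big(c_1(v_1/v_2)^2+2c_3(v_1/v_2)+c_2\big)$ and using $v_1/v_2\to0$ together with the factorisation and normalisation from the first paragraph, one finds that near $0$ the leading term of $g$ is the ``small$\,\times\,$large'' product, so $f_w^2=|g|\asymp v_1v_2$ there. Finally $W(v_1,v_2)=1$ gives $(v_1/v_2)'=v_2^{-2}$, hence $\frac{d}{dt}\log(v_1/v_2)=(v_1v_2)^{-1}$, and therefore
\[
\int_s^{c}\frac{\dt}{v_1v_2}=\log\frac{v_1(c)}{v_2(c)}-\log\frac{v_1(s)}{v_2(s)}\longrightarrow+\infty\qquad\text{as } s\to0^+,
\]
because $v_1(s)/v_2(s)\to0$. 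Combined with $f_w^2\asymp v_1v_2$ near $0$, this yields $\int_0^{m-\vge}f_w^{-2}\dt=\int_0^{m-\vge}wf_w^2\dt=\infty$.

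The step I expect to be the main obstacle is the asymptotic analysis of $g$ at $0$ in the last paragraph: showing that $f_w^2$ is comparable to the small$\,\times\,$large product $v_1v_2$ near $0$ (rather than, say, to $v_2^2$) is exactly where the minimal-growth hypothesis on $v_1$ is used, via $v_1(t)/v_2(t)\to0$, and where the algebraic structure of $g$ (its factorisation and the constraint $c_3^2-c_1c_2=1$) genuinely enters; once this comparison is in hand, the logarithmic divergence of $\int_0(v_1v_2)^{-1}\dt$ closes the argument. Everything else --- the Ermakov--Pinney identity and the count of zeros of $g$ --- is routine bookkeeping.
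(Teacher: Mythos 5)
The proposal breaks down precisely at the step you yourself flagged as the crux: the claim that near $0$ one has $f_w^2=|g|\asymp v_1v_2$. Write $s:=v_1/v_2$, so $g=v_2^2\bigl(c_1 s^2+2c_3 s+c_2\bigr)$ and $s\to0$ as $t\to0^+$ (this much is correct, and follows from $v_1$ being principal at $0$). But then the bracket tends to $c_2$, not to $0$: whenever $c_2\neq0$ the leading term of $g$ is the ``large$\,\times\,$large'' product $c_2v_2^2$, and $f_w^2\asymp v_2^2$ near $0$. The factorisation $g=\text{const}\cdot u_1u_2$ does not rescue this: if $c_2\neq0$ then both $u_1$ and $u_2$ have a nonzero $v_2$-component, so both are comparable to $v_2$ near $0$, and the product is again $\asymp v_2^2$. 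The ``small$\,\times\,$large'' situation you describe occurs exactly when $c_2=0$ (equivalently $c_3=\pm1$), which the single algebraic constraint $c_3^2-c_1c_2=1$ does not force. Since for the nonprincipal solution $v_2$ the identity $(v_1/v_2)'=1/v_2^2$ together with $v_1/v_2\to\ell\geq0$ gives $\int_0 v_2^{-2}\dt=v_1(c)/v_2(c)-\ell<\infty$, your comparison then yields a \emph{finite} integral, not a divergent one. A concrete instance: $q=0$, $v_1=t$, $v_2=1$, $c_1=1$, $c_2=3$, $c_3=2$ (so $c_3^2-c_1c_2=1$) gives $f_w=\sqrt{(t+1)(t+3)}$, for which $\int_0^{c}f_w^{-2}\dt$ is plainly finite.

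For what it is worth, the paper's own proof of this lemma uses a different device (reduction of order and the substitution $\tau=\int_t^{m-\vge}v_1^{-2}\ds$ rather than a factorisation of $g$), but it runs into the same obstruction: the lower bound $f_w^{-2}\geq C v_2^{-2}$ is correct, yet $\int_0 v_2^{-2}\dt<\infty$, and the ostensible divergence of $\int_0^M\tau^{-2}\dtau$ in the paper's computation sits at $\tau=0$, which corresponds to $t=m-\vge$ (where nothing is singular), not to $t=0$. So the gap you run into is not an artefact of your route; it is where the statement as written seems to need an extra hypothesis such as $c_2=0$ (the case $f_w=\sqrt{|c_1v_1^2+2c_3v_1v_2|}$, under which your $f_w^2\asymp v_1v_2$ argument and the ensuing logarithmic computation do go through). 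Everything else in your write-up — the Ermakov--Pinney identity via the quoted general-solution formula, the count of zeros of $g$ via monotonicity of solution quotients, the definition of $m$ as the first zero of $g$, and the reduction of $\int_0 wf_w^2$ to $\int_0 f_w^{-2}$ — is sound and essentially parallel to the paper.
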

	\begin{proof}
		Let $0<b_1<b$ be fixed.
		By  reduction of order, given $v_1$, a second local linearly independent positive solution of the equation $Ly=0$ in $(0,b_1)$ is given by 
		\begin{equation}\label{v_2}
		v(t)=v_1(t)\int_t^{b_1} \dfrac{1}{v_1(s)^2} \ds \qquad  0<t<b_1.
		\end{equation}
	Since $v(b_1)=0$  it follows from \cite[Appendix 1]{MU}  that $v(t)$ has minimal growth near $b_1$, and hence, 
		 $v_2=\alpha v_1+\beta v$, where $\alpha \geq 0$ and $\beta>0$.
		
		The assumption that $v_1$ is a positive solution of minimal growth near $0$ of the equation $L y=0$ implies that  $v_1 \leq C_\vge v_2$ and $v_1 \leq C_\vge v$ in $(0,\varepsilon)$ for any $0<\varepsilon <b_1$.
			 Moreover, since $v_1'v_2-v_1v_2'=1 $, it follows  that $v_1/v_2$  is monotone increasing. Therefore, the singular points of $f_w$ are isolated. Consequently,   $f_w$ is well defined in $(0,m),$ where
		 $m=\min \{b_1,t_0-\delta \}$. Here $t_0$ is first zero of $f_w$ and $0<\delta<t_0.$ \\
		Given $0<\varepsilon<m$ let us compute 
		\begin{align*}
		&
		\int_{0}^{m-\varepsilon} \dfrac{1}{f_w^2} \dt =
		\int_{0}^{m-\varepsilon} \dfrac{1}{|c_1 v_1^2+c_2v_2^2+2c_3 v_1v_2|} \dt
		\geq C \int_{0}^{m-\varepsilon}\dfrac{1}{v_2^2} \dt \\[2mm]
		&
		=
		\int_{0}^{m-\varepsilon} \left (\alpha v_1+ \beta v_1\int_t^{b_1} v_1^{-2} \ds \right )^{\!\!\!-2} \dt  \geq 
		 \int_{0}^{m-\varepsilon}\left (\alpha_1 v_1+ \beta v_1\int_t^{m-\varepsilon} v_1^{-2} \ds \right )^{\!\!\!-2} \dt \\[2mm] 
		&	\geq 
		 \int_0^{m-\varepsilon} \left ( \beta_1 v_1\int_t^{m-\varepsilon} v_1^{-2} \ds \right )^{\!\!\!-2}\dt,
		\end{align*} 
		where $\ga_1\geq \ga$ and $\gb_1\geq \gb$.
		The change of variable $\tau:=\int_t^{m-\varepsilon} v_1^{-2} \ds $ implies that 
		$$
		\int_0^{m-\varepsilon} \dfrac{1}{f_w^2} \dt \geq  C\int_0^{M} \dfrac{1}{\tau^2} \dtau =\infty.
		$$
		Moreover,
		$$
		\int_0^{m-\varepsilon} w f_w^2 \dt = \int_0^{m-\varepsilon} \dfrac{1}{f_w^2} \dt=\infty. \qquad \qedhere
		$$
	\end{proof}
	\begin{rem}\label{23}
		\em{
			If  $c_1,c_2$, and $c_3$ are nonnegative,  then for any $0<m<b$
			the function $f_w$ in Lemma \ref{optimal pinney} is well defined in $(0,m)$.  
		}
	\end{rem}
	As a corollary of Lemma \ref{2.15}  we obtain the following result:
	\begin{proposition}\label{2.17}		
		Assume that  $Ly:=-y'' +qy$ is nonnegative in $\R_{+}$, where $q\in L^1_{\mathrm{loc}}(\R_{+})$. Fix $m>0$. Then there exists an infinite sequence of strictly positive functions 
		$\{w_{i}\}_{i\in \N}$   (depending on $m$), such that for any $k\in \N$ the function
			$$
			\tilde{w}_{k}:=\sum_{i=1}^{k}  {w}_{i} 
			$$
			satisfies:
			\begin{itemize}
				\item $M_{k}:=L-\tilde{w}_{k} \geq 0$  in $(0,m)$. 
				\item  There exists a positive solution $y_{k}$ of the equation $M_{k} y=0$ in $(0,m+1)$ such that 
				$$\int_0^m \dfrac{1}{y_{k}^2} \dt =\int_0^m \tilde{w}_{k} y_{k}^2\dt=\infty, $$ and  
				for any $0<\vge<m$, $\lambda_0(L,\tilde{w}_{k},(0,\vge))=1$.
			\end{itemize}
	\end{proposition}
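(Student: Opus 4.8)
The plan is to build the sequence $\{w_i\}$ iteratively, at each stage applying Lemma~\ref{2.15} (or rather the version with nonnegative coefficients, Remark~\ref{23}) on a slightly enlarged interval, and then arrange that the partial sums $\tilde w_k$ give rise to critical perturbations on $(0,m)$. First I would fix $m>0$ and choose a base interval $(0,b)$ with $b>m+1$ on which $Ly=0$ admits two linearly independent positive solutions $v_1,v_2$, with $v_1$ of minimal growth near $0$ and Wronskian normalized to $1$; this is possible since $L\geq 0$ in $\R_+$ implies $L$ is subcritical on the proper subinterval $(0,b)$, so there is a minimal-growth solution at $0$ and a second independent positive solution. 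Applying Lemma~\ref{2.15} with suitable nonnegative constants $c_1,c_2,c_3$ satisfying $c_3^2-c_1c_2=1$ produces a strictly positive $w_1:=1/f_{w_1}^4$ on an interval which, after choosing the data appropriately (using Remark~\ref{23} so that $f_{w_1}$ has no zeros and is well defined on all of $(0,b)$), contains $(0,m+1)$; then $M_1=L-w_1\geq 0$ there with positive solution $y_1=f_{w_1}$, and Lemma~\ref{2.15} gives $\int_0^m y_1^{-2}\dt=\int_0^m w_1 y_1^2\dt=\infty$.

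The inductive step is the heart of the argument: having produced $\tilde w_{k-1}$ and the operator $M_{k-1}=L-\tilde w_{k-1}\geq 0$ on $(0,m+1)$ together with its positive solution $y_{k-1}$, I treat $M_{k-1}$ as a new Sturm-Liouville operator (in Liouville normal form, absorbing coefficients so that it has the shape $-y''+q_{k-1}y$) and reapply the same construction: choose two linearly independent positive solutions of $M_{k-1}z=0$ on $(0,b)$, one of minimal growth at $0$, normalize the Wronskian, and invoke Lemma~\ref{2.15}/Remark~\ref{23} with nonnegative $c_i$ to get a strictly positive $w_k$ with $M_{k-1}-w_k\geq 0$ on $(0,m+1)$ and with the two divergence integrals infinite on $(0,m)$. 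Setting $\tilde w_k:=\tilde w_{k-1}+w_k$ gives $M_k=L-\tilde w_k=M_{k-1}-w_k\geq 0$; the positive solution $y_k$ of $M_ky=0$ is the one produced by the lemma, and the two required divergences $\int_0^m y_k^{-2}\dt=\int_0^m\tilde w_k y_k^2\dt=\infty$ follow since $\tilde w_k\geq w_k$ makes the second integral only larger, while the first is exactly the conclusion of the lemma applied to $M_{k-1}$. Strict positivity of each $w_i$ is built in because $w_i=1/f_{w_i}^4>0$.

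For the last clause, $\lambda_0(L,\tilde w_k,(0,\vge))=1$ for $0<\vge<m$: since $M_k=L-\tilde w_k\geq 0$ on $(0,m+1)\supset(0,\vge)$ we have $\lambda_0(L,\tilde w_k,(0,\vge))\geq 1$, and on the other hand $M_k$ has on $(0,\vge)$ the positive solution $y_k$ satisfying $\int_0^\vge y_k^{-2}\dt=\infty$ (the same change-of-variables estimate as in Lemma~\ref{2.15} works on $(0,\vge)$, or one notes this integral only increases as the upper limit grows); combined with $\int_0^\vge \tilde w_k y_k^2\dt=\infty$ this says $M_k$ is critical and null-critical at the endpoint $0$ on $(0,\vge)$, so by Proposition~\ref{prop_2.10} (applied on $(0,\vge)$, where the right endpoint contributes trivially since $y_k(\vge)>0$ can be used to build a barrier, or more directly by the one-dimensional criticality criterion of \cite[Appendix 1]{MU}) $\tilde w_k$ cannot be multiplied by any $\lambda>1$ while keeping $M_k-(\lambda-1)\tilde w_k\geq 0$ near $0$; hence $\lambda_0(L,\tilde w_k,(0,\vge))\leq 1$.

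I expect the main obstacle to be bookkeeping in the inductive step: ensuring that at each stage the constants $c_1^{(k)},c_2^{(k)},c_3^{(k)}$ can be chosen nonnegative (so Remark~\ref{23} applies and $f_{w_k}$ is genuinely zero-free, hence $w_k$ strictly positive) \emph{while simultaneously} the interval of validity contains $(0,m+1)$ and the minimal-growth solution of $M_{k-1}$ at $0$ is correctly identified — the latter requires knowing that criticality theory and the notion of minimal growth pass correctly from $L$ to the perturbed operators $M_{k-1}$, which is fine since each $M_{k-1}\geq 0$ on $(0,m+1)$ and is thus subcritical on the open subinterval, so a minimal-growth positive solution at $0$ exists by Theorem~\ref{thm_crit} and the accompanying lemma. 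A secondary technical point is that the divergence $\int_0^m y_k^{-2}\dt=\infty$ must be re-derived for $M_{k-1}$ rather than just $L$; but the proof of Lemma~\ref{2.15} only used that the base operator has a minimal-growth solution at $0$, so it transfers verbatim.
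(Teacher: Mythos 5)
Your proposal tracks the paper's own proof closely: iterate Lemma~\ref{2.15} in its strictly-positive form (Remark~\ref{23}) to produce $w_j=y_j^{-4}$ from the minimal-growth solution of each successive operator $M_{j-1}$ at $0$, then extract the integral divergences on $(0,m)$ from the lemma and conclude as in Proposition~\ref{2.14}. Two small remarks: the paper handles the interval bookkeeping you rightly flag by working on the shrinking nest $(0,\,m+1-\vge_j)$ with $\vge_j=\sum_{i\le j}2^{-i}<1$, so $(0,m)$ is always contained; and for $\lambda_0\le 1$ the correct citation is the oscillation argument of Proposition~\ref{2.14} (via Zettl) rather than Proposition~\ref{prop_2.10} --- indeed $M_k$ is subcritical, not critical, on $(0,\vge)$ since $\int_c^{\vge} y_k^{-2}\dt<\infty$, so your phrase ``critical and null-critical at $0$'' should be read only as a statement about the endpoint integrals driving the oscillation, not as literal criticality.
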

\begin{proof} Fix $m\in (0,\infty)$. 
	Let $v_{1,0}$ and $v_{2,0}$ be two positive solutions of $Ly=0$ in $(0,m+1)$ (depending on $m$) such that their Wronskian $W(v_{1,0}, v_{2,0})$ equals $1$, and such that $v_{1,0}$ has minimal growth in a neighborhood of $0$. In particular, $v_{1,0}$ is positive and well-defined in $\R_{+}$.  Note also that using \eqref{v_2}, it follows that  $v_{2,0}$ is well-defined and positive in $(0,m+1)$.

	Define $w_{1}:=y_{1}^{-4}$, where 
	$$y_{1}:=\sqrt{c_{1,1} v_{1,0}^2+c_{2,1}v_{2,0}^2+2c_{3,1}v_{1,0}v_{2,0}}\,,$$ 
	and $c_{i,1}$, are positive numbers for $i=1,2,3$ satisfying $(c_{3,1})^2-c_{1,1}c_{2,1}=1$.  By Remark \ref{23},  $y_{1}$ is a positive solution of the equation
	$$M_{1}(y):= (L-{w}_{1})y=0 \qquad \mbox{in } (0,m+1).$$ 
	In particular, $M_{1}$ is subcritical in $(0,m+1-\vge_1)$, where $\vge_1=1/2$. 
	
	Repeating the same argument, we obtain in the $j$-th step, $j\geq 2$,   
	\begin{align*}
	&y_{j}:=\!\sqrt{c_{1,j} v_{1, j-1}^2\!+c_{2,j}v_{2, j-1}^2\!+\!2c_{3,j}v_{1, j-1}v_{2, j-1}} \;\mbox{ in } (0,m+1 -\vge_{j-1}), \\
	&\mbox{where } c_{i,j}>0, \mbox{and }(c_{3,j})^2-c_{1,j}c_{2,j}=1.\\
	&{w}_{j}:= (y_{j})^{-4} \quad \mbox{in }  (0,m+1-\vge_{j-1}), \; \;\vge_j:= \vge_{j-1}+(1/2)^j.\\ 
	&v_{1,j}  \mbox{ is a positive solution of $M_ju=0$ in  $(0,m+1 -\vge_{j})$ of minimal growth at } 0.\\
	&v_{2,j}(t):= \!\ga_j v_{1,j}\!+\! \gb_j  v_{1,j}(t)\!\!\int_t^{m+1 -\vge_{j}} \!\!\!\frac{1}{v_{1,j}(s)^2}\! \ds\!>\!0,\; \mbox{ in } (0,m+1 -\vge_{j}),  W(v_{1,j}, v_{2,j})=1,
\end{align*} 
	where $\ga_j, \gb_j>0$. So,  $M_{k}$ is subcritical   in $(0,m)$. 
Moreover, by Lemma \ref{2.15}, we obtain
	\begin{equation*}
\int_0^m y_{k}^2 {w}_{k} \dt = \int_0^{m}\dfrac{1}{y_{k}^2} \dt =\infty ,
	\end{equation*}
	and consequently, 
	\begin{equation*}
	\int_0^m y_{k}^2\tilde{w}_{k} \dt= \infty.
	\end{equation*}
	 Hence, as in Proposition \ref{2.14},  it follows  that
	 $\lambda_0(L,\tilde w_{k},(0,\varepsilon))=1$ for all $\varepsilon \in (0,m)$. 
\end{proof}
\begin{rem}\label{rem_inf-series}
	\em{
		For any $k\in \N$ the inequality $L-\tilde{w}_k \geq 0$ in $(0,m)$ in Proposition~\ref{2.17}  implies the inequality  $$\int_0^m \big(|\vgf'|^2 +q\vgf^2\big)\dt \geq \int_0^m \tilde{w}_k \vgf^2 \dt   \qquad  \forall \in C_0^{\infty}(0,m).$$
	Fix $\vgf\in C_0^{\infty}(0,m)$, it follows that 
	$$\int\limits_0^m  \tilde{w}_k\vgf^2 \dt\leq \int_0^m \big(|\vgf'|^2 +q\vgf^2\big)\dt  \leq\int_0^m \big(|\vgf'|^2 +|q|\vgf^2\big)\dt  \leq C.$$ 
	Since the sequence $\{\tilde{w}_k\}_{k=1}^\infty$ is increasing in $k$ and positive,  it follows by the monotone convergence theorem that 
	$$\tilde{w}:=\lim_{k \to \infty}\tilde{w}_k=\sum_{k=1}^{\infty} {w}_k$$ is finite almost everywhere in $(0,m)$. Moreover, $\tilde{w}$ is a Hardy-weight for $L$ in $(0,m)$.
}
\end{rem}
	\section{Improved and optimal Hardy-weights in higher dimensions}\label{sec-optimal_higher_d}
		In the present section we  reduce the problem of constructing optimal Hardy-weights in higher dimensions to a one-dimensional problem. Roughly speaking, each optimal Hardy-weight for the one-dimensional Laplacian yields in the symmetric case optimal Hardy-inequalities for subcritical operators in higher dimensions.  
	\begin{defi}
		\em{
			We say that a function $0\leq w\in L^{\infty}_{\text{loc}}(\R_{+})$ belongs to $\mathcal{W}_1(\R_{+})$ if $w$ is an optimal Hardy-weight for the one-dimensional Laplace operator
			$$
			Ly:=-y''\qquad \mbox{on } \R_{+}.
			$$
		}
	\end{defi}
We have.
	\begin{lem}\label{1D if and only if}
			Let $0\leq w\in L^1_{\mathrm{loc}}(\R_{+}).$ Then $w\in \mathcal{W}_1(\R_{+})$ with ground state $f_w$ if and only if the following three conditions are satisfied:
			\begin{enumerate}
				\item There exists $f_w >0$ such that $(L-w)f_w=0$ in $\R_{+}$,\\[1mm]
				
				\item $\displaystyle{\int_{0}^{1}\dfrac{1}{f_w^2}\dt = \int_{1}^{\infty}\dfrac{1}{f_w^2}\dt=\infty}$,\\[2mm]
					
				\item $\displaystyle{\int_{0}^{1}w  f_w^2\dt=\int_{1}^{\infty}w f_w^2\dt=  \infty}$,
			\end{enumerate}
	\end{lem}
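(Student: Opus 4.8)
The plan is to derive Lemma~\ref{1D if and only if} directly from Proposition~\ref{prop_2.10}, since $\mathcal{W}_1(\R_{+})$ is by definition the set of optimal Hardy-weights for the specific Sturm--Liouville operator $L y = -y''$ on $(a,b)=(0,\infty)$, i.e.\ $p\equiv 1$ and $q\equiv 0$. First I would check that the hypotheses of Proposition~\ref{prop_2.10} are met: $q=0\in L^1_{\loc}(0,\infty)$ and $p=1\in C^{1,\alpha}(0,\infty)$ with $p>0$, so the proposition applies verbatim. It is also worth noting at the outset that $L=-d^2/dt^2$ on $\R_{+}$ is subcritical, since $t\mapsto t$ and $t\mapsto 1$ are two linearly independent positive solutions of $Ly=0$, so the standing assumption ``$w$ is a Hardy-weight for a subcritical operator'' in Proposition~\ref{prop_2.10} is automatic here; this is the content we need so that ``optimal Hardy-weight'' makes sense.

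Next I would unwind the conclusion of Proposition~\ref{prop_2.10} in this special case. Specializing $p\equiv 1$, condition~\eqref{eq:2.5} becomes
\begin{equation*}
\int_a^c \frac{1}{f_w^2}\dt = \int_c^b \frac{1}{f_w^2}\dt = \int_a^c w f_w^2\dt = \int_c^b w f_w^2\dt = \infty
\end{equation*}
for every $a<c<b$, which with $(a,b)=(0,\infty)$ and the particular choice $c=1$ is exactly conditions~(2) and~(3) of the lemma, while condition~(1) is precisely the requirement in Proposition~\ref{prop_2.10} that there exist a positive $f_w$ solving $(L-w)f_w=0$ in $(0,\infty)$. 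Thus, reading Proposition~\ref{prop_2.10} with $p=1$, $q=0$ immediately gives: $w\in\mathcal{W}_1(\R_{+})$ iff there exists $f_w>0$ with $(L-w)f_w=0$ and \eqref{eq:2.5} holds at some (equivalently every) interior point $c$. It remains only to observe that the divergence of these four integrals is independent of the base point $c\in(0,\infty)$: since $f_w$ is continuous and strictly positive on $(0,\infty)$, the functions $1/f_w^2$ and $w f_w^2$ (with $w\in L^1_{\loc}$) are locally integrable on $(0,\infty)$, so moving $c$ changes each integral by a finite amount and hence does not affect whether it is infinite. Consequently it suffices to verify \eqref{eq:2.5} at the single value $c=1$, which is how conditions~(2)--(3) are phrased.

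Finally I would address the identification of $f_w$ with ``the ground state'' appearing in the statement. In the forward direction, Proposition~\ref{prop_2.10} (via criticality of $L-w$) tells us that the positive solution $f_w$ is unique up to a multiplicative constant and is a solution of minimal growth at both ends, hence it is the Agmon ground state of $L-w$ in $\R_{+}$; this matches the phrase ``with ground state $f_w$''. In the reverse direction, conditions~(1)--(3) are exactly the hypotheses needed to run the second half of the proof of Proposition~\ref{prop_2.10}: the general solution of $(L-w)y=0$ is $\alpha f_w + \beta f_w\int_1^t f_w^{-2}\ds$, and by condition~(2) any such solution with $\beta\neq 0$ is negative near $0$ or near $\infty$, forcing uniqueness of the positive solution and hence criticality of $L-w$ (one-dimensional case, \cite[Appendix 1]{MU}), while condition~(3) is by definition null-criticality of $L-w$ with respect to $w$ at each end; together these say $w$ is an optimal Hardy-weight, i.e.\ $w\in\mathcal{W}_1(\R_{+})$, and $f_w$ is its ground state. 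There is essentially no obstacle here beyond bookkeeping; the only point requiring a word of care is the base-point independence of the integral conditions, which is why the lemma can afford to fix $c=1$ while Proposition~\ref{prop_2.10} quantifies over all $c$.
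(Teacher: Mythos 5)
Your proposal is correct and takes exactly the same route as the paper, whose entire proof of Lemma~\ref{1D if and only if} is the one sentence ``Follows immediately from Proposition~\ref{prop_2.10}.'' You have simply spelled out the specialization $p\equiv 1$, $q\equiv 0$, $(a,b)=(0,\infty)$, the base-point independence of the integral conditions, and the identification of $f_w$ with the Agmon ground state, all of which is the bookkeeping the authors leave implicit.
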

	\begin{proof}
	Follows immediately from Proposition \ref{prop_2.10}. 
	\end{proof}
	Our main result of the the present section is as follows.
	\begin{theorem}\label{criticality_linear} 
			Let  $\Omega \subset \mathbb{R}^n$, $n \geq 2$,  be a domain containing $x_0$. Let $P$ be a subcritical  operator  in $\Omega$ and let $G(x):=G_{P}^{\Omega}(x,x_0)$ be its positive minimal Green function with singularity at $x_0$. Let $0\lneqq \varphi \in \core$  and consider the associated  Green potential 
			$G_\varphi(x)$ (given by \eqref{green potential}).
			Assume that there exists $u\in \mathcal{C}_P(\Omega)$  satisfying
			\begin{equation}\label{eq:Gphi/u_tends_0}
			\lim_{x \to \overline{\infty}} \dfrac{G(x)}{u(x)}=0.
			\end{equation}
			Fix $w\in \mathcal{W}_1(\R_{+})$, and let $f_w(t)$ be the corresponding  ground state. 
			Suppose that $f_w'\geq 0$ on  $\{t=G_{\varphi}(x)/u(x) \mid x\in \supp(\varphi) \}$.
			Define
			\begin{equation*}
			W:=\dfrac{P \left (u f_w \left ( G_{\varphi} /u\right) \right )}{u f_w \left ( G_{\varphi} /u\right)}\, .
			\end{equation*}
			Then 
			\begin{enumerate}
				\item 
				 $W\geq 0$ in $\Gw$ and $W:=\left |\nabla (G_{\varphi}/u) \right |_{A}^{2}w(G_{\varphi}/u), ~ \text{in} ~ \Gw \setminus \supp(\varphi).$
				 
				\item			$P-W$ is critical  in $\Omega$ with ground state $u f_w (G_{\varphi}/u)$. 
				\item $W$ is optimal in a neighborhood of infinity in $\Omega$. 
			\item If $P$ is symmetric, then $P-W$ is null-critical with respect to $W$,  and therefore, $W$ is an optimal Hardy-weight for $P$ in $\Gw$.
		\end{enumerate}
	\end{theorem}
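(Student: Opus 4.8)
The plan is to verify the four assertions in order, reducing each one to a one-dimensional computation along the level sets of the function $s:=G_\varphi/u$, and exploiting the Ermakov–Pinney-type functional relation $w=L(f_w)/f_w$ enjoyed by any $w\in\mathcal{W}_1(\R_+)$. First, for assertion (1), I would compute $W$ directly. Writing $v:=uf_w(s)$, a direct calculation using $Pu=0$, $P G_\varphi=\varphi$ in $\Gw$, and the product/chain rules for $P$ gives, away from $\supp\varphi$,
$$
\frac{Pv}{v}=-\frac{f_w''(s)}{f_w(s)}\,|\nabla s|_A^2 = |\nabla s|_A^2\,w(s),
$$
since $(L-w)f_w=0$ on $\R_+$ means $-f_w''=w f_w$. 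On $\supp\varphi$ one checks $W\ge 0$ using the hypothesis $f_w'\ge 0$ on the relevant range of $s$ together with the sign of the remaining first-order and zeroth-order terms produced by $PG_\varphi=\varphi\ge0$; this is the only place the monotonicity assumption on $f_w$ is used. Thus $W\ge0$ in $\Gw$, proving (1).

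For assertion (2), the supersolution $v=uf_w(s)$ satisfies $Pv=Wv$ by construction, i.e.\ $(P-W)v=0$, so $v$ is a positive solution of $(P-W)y=0$ in $\Gw$; it remains to show $v$ has minimal growth at $\bar\infty$, equivalently that $P-W$ is critical with ground state $v$. Since $G_\varphi\asymp G$ near $\bar\infty$ and $G/u\to0$, the function $s$ tends to $0$ at infinity, so the behavior of $v$ near $\bar\infty$ is governed by $u\cdot f_w(s)$ with $s\to0^+$. I would invoke the one-dimensional criticality encoded in $w\in\mathcal{W}_1(\R_+)$: condition (2) of Lemma~\ref{1D if and only if}, $\int_0^1 f_w^{-2}\dt=\infty$, forces the ``other'' solution $f_w(t)\int_t^1 f_w^{-2}$ to be non-positive near $0$, which is the 1D mechanism that makes $f_w$ a solution of minimal growth at $0$. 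Transplanting this along $s$—comparing $v$ on $\partial\Gw_k$ with an arbitrary positive supersolution of $P-W$ outside $\Gw_k$ and using the generalized maximum principle together with the integral divergence—yields that $v$ has minimal growth at $\bar\infty$; hence $P-W$ is critical with ground state $v$. I expect this transplantation step to be the main obstacle, because it requires controlling the anisotropic gradient $|\nabla s|_A^2$ along the exhaustion and relating the multi-dimensional notion of minimal growth to the one-dimensional one; the cleanest route may be to cite Theorem~\ref{thm:DFP}-type arguments from \cite{DFP,KP} verbatim with $u f_w(s)$ in place of $\sqrt{G_\varphi u}$, noting that the proof there only used the structure $u\cdot h(s)$ with $h$ of minimal growth at $0$.

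For assertion (3), optimality at infinity, I would argue as in Proposition~\ref{2.14}: if $P-(1+\lambda)W\ge0$ in $\Gw\setminus K$ for some $\lambda>0$, then $-(P-(1+\lambda)W)v=\lambda W v$, and pairing against $v$ and integrating over $\Gw\setminus K$ produces $\lambda\int_{\Gw\setminus K} W v^2$, which I claim is $+\infty$ because it reduces along level sets of $s$ to $\lambda\int_0^\varepsilon w f_w^2\dt=\infty$ by condition (3) of Lemma~\ref{1D if and only if}. An infinite such integral contradicts the existence of a positive supersolution near infinity (via an oscillation/maximum-principle argument, cf.\ the use of \cite[Theorem 6.4.1]{Z1} and the generalized maximum principle in Proposition~\ref{2.14}), so $\lambda_\infty(P,W,\Gw)=1$, i.e.\ $W$ is optimal at infinity.

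Finally, for assertion (4), assume $P$ is symmetric, so $\phi^*=\phi=v$ and we must show $v^2\notin L^1(E,W\dx)$ for every end $E$ of $\Gw$. Again coarea/level-set reduction along $s$ gives $\int_E W v^2\dx \gtrsim \int_0^\varepsilon w(t)f_w(t)^2\dt=\infty$ by Lemma~\ref{1D if and only if}(3); the comparability constants come from $G_\varphi\asymp G$ near $\bar\infty$ and the uniform ellipticity on the relevant annular regions. Null-criticality with respect to $W$ then follows by definition, and combined with (2) this means $W$ is an optimal Hardy-weight for $P$ in $\Gw$ (the optimality-at-infinity clause being automatic by Remark~\ref{rem_1.3} or, alternatively, already established in (3)). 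The only subtlety here is justifying the reduction integral on a genuine end rather than near the single ideal point $\bar\infty$; I would handle this by noting that the hypothesis $G/u\to0$ at $\bar\infty$ gives $s\to0$ along \emph{every} end, so the same 1D divergence applies end-by-end.
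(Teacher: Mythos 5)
Your overall strategy — compute $W$ via the level-set identity, use a Khas'minski\u{i}-type comparison for criticality, invoke one-dimensional oscillation for optimality at infinity, and reduce null-criticality to a one-dimensional integral by coarea — matches the paper's proof, and assertions (1), (2), (4) are argued essentially correctly (modulo filling in the explicit comparison function $h:=u\,f_1(G_\varphi/u)$ with $f_1:=f_w\int_1^t f_w^{-2}\ds$ for the Khas'minski\u{i} step, and the Green's-formula computation showing that the flux $\int_{G_\varphi/u=t}\langle uA\nabla G_\varphi-G_\varphi A\nabla u,\vec\sigma\rangle\dsigma$ is a positive constant, which is the precise mechanism behind your ``comparability constants'' in (4)).

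However, your argument for assertion (3) has a genuine gap. You claim $\int_{\Gw\setminus K} W v^2\dx=\infty$ by reducing it along level sets of $s=G_\varphi/u$ to $\int_0^\varepsilon w f_w^2\dt$, and then derive a contradiction from the infiniteness of this $n$-dimensional integral. But the coarea reduction to a \emph{positive} one-dimensional integral requires the level-set flux to be bounded away from $0$, which the paper establishes via the Green identity $\int_{t_1<s<t_2}\big(P(u)G_\varphi-P(G_\varphi)u\big)\dx = \text{boundary terms}$; carrying out the integration by parts one finds a residual bulk term $u(\bt-\bb)\cdot\nabla G_\varphi - G_\varphi(\bt-\bb)\cdot\nabla u$ that vanishes only when $\bt=\bb$, i.e.\ when $P$ is symmetric. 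Thus the coarea reduction you invoke for (3) is available only in the symmetric case, so your proof of (3) would not cover non-symmetric $P$, which is precisely the point of stating (3) separately from (4). Moreover, even granting the infinite integral, the implication ``$\int W v^2=\infty$ $\Rightarrow$ no positive supersolution of $P-(1+\lambda)W$ near $\bar\infty$'' is a one-dimensional Sturm-type fact (Zettl's Theorem~6.4.1) and is not an off-the-shelf $n$-dimensional theorem. The paper sidesteps both issues by lifting the one-dimensional oscillatory solution directly: since $\int_0^1 w f_w^2\dt=\infty$, the equation $-y''-(1+\xi)wy=0$ has an oscillatory solution $y_\xi$ near $t=0$; then by the identity \eqref{Puv}, $u\,y_\xi(G_\varphi/u)$ is a sign-changing solution of $(P-(1+\xi)W)y=0$ near $\bar\infty$, which contradicts the generalized maximum principle — no symmetry and no $n$-dimensional integral needed. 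You should replace your argument for (3) with this lifting.
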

	\begin{proof}
		 Without loss of generality, we assume that $G_{\varphi}/u<1$ in $\Gw$.  By Lemma \ref{1D if and only if}, the function $$f_1(t):=f_w(t)\int\limits_{1}^{t}\dfrac{1}{f_w^2(s)}\ds$$
		is a linearly independent solution of the equation $(L-w)y=0$ in $\R_{+}$ which is positive for $t>1$, negative for $t<1$, and satisfies
		\begin{equation}\label{eq:3.3}
		\lim_{t\to 0}\dfrac{f_w(t)}{f_1(t)}=\lim_{t\to \infty}\dfrac{f_w(t)}{f_1(t)}=0.
		\end{equation}
			
		Consider the functions $h,v:\Gw\to \R$ given by 
		$$v(x):=u(x)f_w \left(\dfrac{G_{\varphi}(x)}{u(x)}\right),  \qquad 
		h(x):=
		u(x)f_1 \left(\dfrac{G_{\varphi}(x)}{u(x)}\right).
		$$
		A direct calculation (see (4.13) in \cite{DFP}) shows that
		for any $u\in \mathcal{C}_P(\Gw)$ and $g\in \mathcal{S}_P(\Gw)$
		\begin{equation}\label{Puv}
		P(uf_w(g/u))=-uf_w''(g/u)|\nabla (g/u)|_A^2+uf_w'(g/u)P(g).
		\end{equation}
		In particular, $W \geq 0$ in $\Gw$,
		and $v\in \mathcal{C}_{P-W}(\Gw)$. Moreover, 
		\begin{align*}
		&
		P(v)\!=\!-|\nabla (G_{\varphi}\!/u)|_{A}^{2}uf_w''(G_{\varphi}\!/u)\!=\!
		|\nabla (G_{\varphi}\!/u)|_{A}^{2}w(G_{\varphi}\!/u)uf_w(G_{\varphi}\!/u)\!=\!
		Wv  \text{ in }  \Gw \!\setminus\! \supp(\varphi), \\[2mm] 
		&
		P(h)\!=\!-|\nabla (G_{\varphi}\!/u)|_{A}^{2}uf_1''(G_{\varphi}\!/u)\!=\!
		|\nabla (G_{\varphi}\!/u)|_{A}^{2}w(G_{\varphi}\!/u)uf_1(G_{\varphi}\!/u)\!=\!Wh  \text{ in }  \Gw \!\setminus \!\supp(\varphi).
		\end{align*}
		Hence, $h,v\in \mathcal{C}_{P-W}(\Omega\setminus \supp(\varphi))$.
		Moreover,  \eqref{eq:Gphi/u_tends_0} and \eqref{eq:3.3} imply
		\begin{equation*}
		\lim_{x\to \overline{\infty}}\dfrac{v(x)}{h(x)}=\lim_{x\to \overline{\infty}}\dfrac{u(x)f_w\left(\dfrac{G_{\varphi}(x)}{u(x)}\right)}{u(x)f_1 \left(\dfrac{G_{\varphi}(x)}{u(x)}\right)}=0.
		\end{equation*}
		By  a Khas'minski\u{i}-type criterion \cite[Proposition~6.1]{DFP},  it follows that $v$ has minimal growth in a neighborhood of infinity in $\Omega$, implying that $v$ is a ground state of the operator $P-W$ in $\Gw$, and by Theorem~\ref{thm_crit}, $P-W$ is critical in $\Gw.$ 
		
		Next we prove that $W$ is optimal at $\overline{\infty}$ (recall that $P$ is not necessarily symmetric). Let $\xi>0$, and consider the equation
		\begin{equation} \label{330}
		-y''-wy=\xi w y, ~ \text{in} ~ \R_{+}.
		\end{equation}
		Lemma \ref{1D if and only if} implies that 
		\begin{equation*}
		\int_{0}^1 w f_w^2  \dt=\infty.
		\end{equation*}
		By Proposition~\ref{2.14} (or \cite[Theorem 6.4.1]{Z1}), there exists an oscillatory (near $0$) solution $y_{\xi}$ to \eqref{330} satisfying $y_\xi,y_\xi'\in AC_{\text{loc}}(0,1)$. Therefore, $u y_{\xi}(G_{\varphi}/u)$ is an oscillatory solution of the operator $P-(1+\xi) W$ in a neighborhood of $\overline{\infty}$. Hence, $P-(1+\xi) W$ does not have a positive solution in a neighborhood of $\overline{\infty}$. In other words, $\lambda_{\infty}=\lambda_0=1$. Thus, $W$ is optimal at $\overline{\infty}.$
		
		It remains to prove that when $P$ is symmetric, $P-W$ is null-critical with respect to $W$.
		
		Take $\ga >0$ sufficiently small such that 
		$$
		\{0<G_{\varphi}/u<\ga \}\cap \supp \varphi=\emptyset.
		$$
		For any $0<\varepsilon<\ga $, the coarea formula (see (9.4)) in \cite{DFP}) implies
		\begin{align}\label{eq-coarea}
		& \int\limits_{\varepsilon<G_{\varphi}/u<\ga } v^2 W \dx=\!\!\! \int\limits_{\varepsilon<G_{\varphi}/u<\ga } \!\!\!u^2f_w^2(G_{\varphi}/u) |\nabla (G_{\varphi}/u)|_{A}^2w(G_{\varphi}/u) \dx = \nonumber\\ 
		&
		\int\limits_{\varepsilon}^{\ga}f_w^2(t)w(t)  \int\limits_{G_{\varphi}/u=t} u^2|\nabla (G_{\varphi}/u) |_A \dsigma  \dt 
		=  \nonumber\\ &
		\int\limits_{\varepsilon}^{\ga}f_w^2(t)w(t)  \int\limits_{G_{\varphi}/u=t} \langle
		uA \nabla G_{\varphi}-G_{\varphi}A \nabla u,\vec{\sigma}\rangle 
		\dsigma   \dt, 
		\end{align}
		where $\vec{\sigma}$ is a normal vector (in the metric $|\cdot  |_A$) to the level set $\{ G_{\varphi}/u=t\}.$
		By Green's formula for  $0<\varepsilon<t_1<t_2<\ga $ such that ${t_1<G_{\varphi}/u<t_2}$ is a regular domain (see \cite{DFP})), we have 
		\begin{align*}
		&
		0=\int\limits_{t_1<G_{\varphi}/u<t_2} (P(u)G_{\varphi} - P(G_{\varphi}) u) \dx
		= \\ &
		\int\limits_{G_{\varphi}/u=t_2} \langle
		uA \nabla G_{\varphi}-G_{\varphi}A \nabla u,\vec{\sigma}\rangle \ds-
		\int\limits_{G_{\varphi}/u=t_1} \langle
		uA \nabla G_{\varphi}-G_{\varphi}A \nabla u,\vec{\sigma}\rangle \ds.
		\end{align*}
		Hence, there exists $\gb$ such that for any $t\in (0,\ga )$
		$$
		\int_{G_{\varphi}/u=t} \langle
		uA \nabla G_{\varphi}-G_{\varphi}A \nabla u,\vec{\sigma}\rangle 
		\ds   =\gb. 
		$$
		Moreover, by \eqref{eq-coarea}, we infer that $\gb > 0$. Consequently, 
		$$
		\int_{0<G_{\varphi}/u<\ga} v^2 W \dx = \gb \lim_{\varepsilon \to 0}\int_{\varepsilon}^\ga
		f_w^2(t)w(t) \dt = \infty . \qquad  \qedhere
		$$
	\end{proof}
\begin{rem}
	\em{
	If condition \eqref{eq:Gphi/u_tends_0} is satisfied in $\Gw$ for some $u\in \mathcal{C}_P(\Omega)$, then  \eqref{eq:Gphi/u_tends_0} is satisfied  in $\Gw^*:=\Gw \setminus\{x_0\}$ for any $x_0\in \Gw \setminus \supp (\varphi)$ with $\tilde{u}:=u+G$ replacing $u$. Indeed,  one may
take the pair $G_{\varphi}$ and $\tilde{u}$.  In this case, the corresponding (optimal in the symmetric case) Hardy-weight for $P$ in $\Gw^*$ is    $W:=\dfrac{P \left ( \tilde{u} f_w \left ( G_{\varphi} /\tilde{u}\right ) \right )}{\tilde{u} f_w \left ( G_{\varphi} /\tilde{u}\right)}$\,.
}
\end{rem}
As a corollary of Proposition \ref{24} and Theorem \ref{criticality_linear}, we obtain the following result which generalizes the results of Barbatis, Filippas and Tertikas \cite{Barbatis}: 
\begin{lem}\label{lemma 5.5}
	Let  $\Omega \subset \mathbb{R}^n$, $n \geq 2$,  be a domain containing $x_0$. Let $P$ a subcritical  operator  in $\Omega$, and let $G(x):=G_{P}^{\Omega}(x,x_0)$ be its Green function with singularity at $x_0$,  and let $G_{\varphi}$ be a Green potential with  density $0\lneqq \varphi \in \core$  (see \eqref{green potential}).
	Then there exists an infinite sequence of Hardy-weights
	 $\{ {W}_i \}_{i\in \N}$ such that for any $k\in \N$ 
	the function 
	$$
	 \tilde{W}_k:=\sum_{i=1}^{k} {W}_i 
	$$
	satisfies $P-\tilde{W}_k \geq 0$ in $\Gw$ and $\lambda_0(P,\tilde{W}_k,\Gw)=\lambda_\infty(P,\tilde{W}_k,\Gw)=1$. Moreover, $\tilde W:=\sum_{k=1}^{\infty} {W}_k$ is a Hardy-weight for $P$ in $\Gw$.
\end{lem}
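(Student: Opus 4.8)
The plan is to feed the one-dimensional iterated construction of Proposition~\ref{24} into the reduction mechanism underlying Theorem~\ref{criticality_linear}. First, as in Theorem~\ref{criticality_linear}, fix $u\in\mathcal{C}_P(\Omega)$ with $\lim_{x\to\overline{\infty}}G(x)/u(x)=0$; after rescaling $u$ we may assume that $s:=G_\varphi/u$ maps $\Omega$ into $(0,1)$, put $[a,b]:=s(\supp\varphi)\subset(0,1)$, and fix $m>1$. Applying Proposition~\ref{24} (equivalently, Proposition~\ref{2.17}) to the one-dimensional Laplacian $L_0y:=-y''$ on $(0,m)$, and choosing the free parameters $c_{i,j}$ in the Ermakov-Pinney building blocks so that, in addition, each positive solution $y_k$ of $M_ky:=(L_0-\tilde w_k)y=0$ satisfies $y_k'\ge 0$ on $[a,b]$ (this is possible because the solution of $L_0$ of minimal growth at $0$ behaves like $t$ near $0$), we obtain strictly positive functions $\{w_i\}_{i\in\N}$ on $(0,m)$ with partial sums $\tilde w_k:=\sum_{i\le k}w_i$ such that $M_k\ge 0$ in $(0,m)$, $-y_k''=\tilde w_k y_k$ with $y_k>0$ on $(0,m)$ and $y_k'\ge 0$ on $[a,b]$, and $\int_0^c y_k^{-2}\dt=\int_0^c\tilde w_ky_k^2\dt=\infty$ for every $0<c<m$.

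Next I would transport this to $\Omega$. Put $v_k(x):=u(x)\,y_k\!\big(G_\varphi(x)/u(x)\big)$ and, using the identity $P\big(uf(g/u)\big)=-uf''(g/u)|\nabla(g/u)|_A^2+uf'(g/u)P(g)$ from the proof of Theorem~\ref{criticality_linear} (with $g=G_\varphi$, $P(g)=\varphi$), define
\[
\tilde W_k:=\frac{P(v_k)}{v_k}=\tilde w_k(s)\,|\nabla s|_A^2+\frac{y_k'(s)}{y_k(s)}\,\varphi\qquad\text{in }\Omega,
\]
which coincides with $\tilde w_k(s)|\nabla s|_A^2$ off $\supp\varphi$ and is $\ge 0$ because $y_k'\ge 0$ on $[a,b]$; then set $W_1:=\tilde W_1$ and $W_i:=\tilde W_i-\tilde W_{i-1}$ for $i\ge 2$. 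By construction $v_k\in\mathcal{C}_{P-\tilde W_k}(\Omega)$, so $P-\tilde W_k\ge 0$ and $\lambda_0(P,\tilde W_k,\Omega)\ge 1$; moreover $P-W_i=(P-\tilde W_i)+\tilde W_{i-1}\ge 0$ with $\tilde W_{i-1}\ge 0$, so every $W_i$ is a Hardy-weight for $P$ in $\Omega$. For the matching bound on $\lambda_\infty$, fix $\xi>0$: the divergence $\int_0^c\tilde w_ky_k^2\dt=\infty$ forces, by \cite[Theorem~6.4.1]{Z1} (cf.\ the proofs of Proposition~\ref{2.14} and of Theorem~\ref{criticality_linear}), the equation $M_ky=\xi\tilde w_ky$ to possess a solution $y_\xi$ oscillating near $0$, whence $u\,y_\xi(G_\varphi/u)$ is an oscillatory solution of $P-(1+\xi)\tilde W_k$ in a neighborhood of $\overline{\infty}$, contradicting the generalized maximum principle there. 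Hence $\lambda_\infty(P,\tilde W_k,\Omega)\le 1$, which together with $1\le\lambda_0\le\lambda_\infty$ gives $\lambda_0(P,\tilde W_k,\Omega)=\lambda_\infty(P,\tilde W_k,\Omega)=1$.

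Finally, for the series statement: by the argument of Remark~\ref{rem_inf-series} applied on $(0,m)$, the limit $\tilde w:=\sum_k w_k$ is finite almost everywhere and satisfies $L_0-\tilde w\ge 0$ in $(0,m)$; transporting this through $s$, and using the coarea formula to see that $\tilde W:=\sum_k W_k$ agrees off $\supp\varphi$ with $\tilde w(s)|\nabla s|_A^2\in L^1_{\loc}(\Omega)$, a monotone-limit argument --- normalizing the solutions $y_k$ at a fixed point, extracting a locally $C^1$ limit by one-dimensional ODE estimates, and pulling back through $s$ --- produces a positive supersolution of $P-\tilde W$, so $\tilde W$ is a Hardy-weight for $P$ in $\Omega$.

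The step I expect to be the main obstacle is the contribution of $\supp\varphi$. Off the support everything reduces verbatim to the one-dimensional facts of Proposition~\ref{24}, but on $\supp\varphi$ the weight carries the extra term $(y_k'(s)/y_k(s))\varphi$, so one must choose the free Ermakov-Pinney parameters $c_{i,j}$ not only so that $y_k'\ge 0$ on $s(\supp\varphi)$ for every $k$ (which, as sketched above, can be arranged by taking $m$ large and the dominant coefficient $c_{1,k}$ large), but also so that the logarithmic derivatives $y_k'/y_k$ are nondecreasing in $k$ on that set --- this is exactly what forces $W_i=\tilde W_i-\tilde W_{i-1}\ge 0$ on $\supp\varphi$ and legitimizes the monotone passage to the limit defining $\tilde W$; balancing these requirements against the convergence of $\sum_k w_k$ is the delicate part. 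A secondary difficulty is the limiting argument for $\tilde W$ in the nonsymmetric case, where the quadratic-form shortcut used in Remark~\ref{rem_inf-series} is unavailable and one has to work directly with positive (super)solutions together with a Harnack/compactness argument.
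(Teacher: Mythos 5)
Your outline matches the paper's proof in all the essentials: invoke Proposition~\ref{2.17} with $q=0$ to manufacture the one-dimensional weights $w_i$, transport them through $s:=G_{\varphi}/u$ using the identity \eqref{Puv}, prove $\lambda_\infty\le 1$ by producing an oscillatory solution as in Proposition~\ref{2.14}, and close the series statement by a monotone-limit argument in the spirit of Remark~\ref{rem_inf-series}. The one place where you deviate is in the definition of $\tilde W_k$, and it costs you. The paper sets $\tilde W_k:=|\nabla s|_A^2\,\tilde w_k(s)$ everywhere, so that $W_i=\tilde W_i-\tilde W_{i-1}=|\nabla s|_A^2\,w_i(s)$ is nonnegative \emph{automatically}; nonnegativity of $P-\tilde W_k$ still holds because $P v_k=\tilde W_k v_k+u\,y_k'(s)\varphi\ge \tilde W_k v_k$ once $y_k'\ge 0$ on $s(\supp\varphi)$, so $v_k$ is a positive supersolution. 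You instead set $\tilde W_k:=P v_k/v_k$, which carries the extra term $\big(y_k'(s)/y_k(s)\big)\varphi$ on $\supp\varphi$; this is why you are then forced to chase the additional constraint that $y_k'/y_k$ be nondecreasing in $k$ on $s(\supp\varphi)$, which you rightly flag as ``the delicate part'' but do not actually settle. Using the off-support formula as the global definition (as the paper does) dissolves that obstacle entirely. Beyond that, both you and the paper implicitly rely on being able to choose the Ermakov--Pinney parameters so that $y_k'\ge 0$ on $s(\supp\varphi)$ --- you make the dependence explicit, which is a point in your favor --- and both treat the passage to $\tilde W=\sum_k W_k$ somewhat lightly in the nonsymmetric case, where, as you observe, the quadratic-form shortcut of Remark~\ref{rem_inf-series} is unavailable and one must instead extract a positive supersolution of $P-\tilde W$ by normalizing the $v_k$ and using a Harnack/compactness limit.
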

\begin{proof}
	Apply Proposition \ref{2.17} with $q=0$ to obtain a sequence $\{w_i\}_{ i \in \N}$ such that 
	$$-y''-\sum_{i=1}^k w_i \geq 0 ~\mbox{in} ~ (0,\sup\limits_{\Gw} (G_{\varphi}/u)).$$
	By the proof of Theorem~\ref{criticality_linear} the function
	$\tilde{W}_k:=|\nabla(G_{\varphi}/u)|_A^2\sum_{i=1}^k {w}_i(G_{\varphi}/u)$  is a Hardy-weight of $P,$ i.e., $P-\tilde{W}_k \geq 0$ in $\Gw$. Moreover, as in Remark~\ref{rem_inf-series}, it follows that $\tilde W$ is a well defined Hardy-weight for $P$ in $\Gw$. 
	
	Assume by contradiction that  $\lambda_0(P,\tilde{W}_k,\Gw \setminus K)>1$, namely, there exists $K \Subset \Gw$ such that 
	$P-(1+\varepsilon^2)\tilde{W}_k \geq 0$ in $\Gw \setminus K$ for some $\varepsilon>0$. By the same argument as in Lemma~\ref{2.14} there exists an oscillatory solution of the equation $-y''-(1+\varepsilon^2)(\sum_{i=1}^k  {w}_i)y=0$ near $0$, implying that there exists an oscillatory solution of the equation $P-(1+\varepsilon^2)\tilde{W}_k=0$ near $\overline{\infty}$. Such a solution contradicts the maximum principle. Hence,  
	$\mathcal{C}_{P-(1+\varepsilon^2)\tilde{W}_k}(\Gw \setminus K)= \emptyset$, and we arrive at a contradiction. 
\end{proof}
	\section{Improved optimal Hardy-weights in the nonsymmetric case}\label{sec-improved-optimal}
	In the previous section we proved for symmetric operators that the Hardy-weights obtained in Theorem~\ref{criticality_linear} are optimal, while in the nonsymmetric case we could only prove the criticality and optimality at infinity.  In the present section we use the Ermakov-Pinney equation to obtain a family of optimal Hardy-weights for not necessarily symmetric operators.

	The following lemma exploits the general positive solutions of the Ermakov-Pinney equation $y''(t)=-\dfrac{1}{y^3}$ in $(0,2/a)$ that vanishes at $t=0$, namely,  $y_a(t):=\sqrt{2t-at^2} $ to obtain optimal Hardy-weights. 
Using Proposition \ref{prop_2.10} we obtain:
	\begin{lem} \label{3pinney}
			Let $w(t):=(2t-at^2)^{-2}$ and $f_w(t):=\sqrt{2t-at^2}$, where $a>0$ is fixed. Then 
			\begin{enumerate}
				\item $-f_w ''-w f_w=0$, and  $f_w>0$ in $ (0,2/a)$. \\
				
				\item $\displaystyle{\int_{0}^{1/a}\dfrac{1}{f_w^2}\dt= \int_{1/a}^{2/a}\dfrac{1}{f_w^2}\dt=\infty}$,\\[1mm]
				
				\item $\displaystyle{\int_{0}^{1/a}w f_w^2\dt=\int_{1/a}^{2/a}w f_w^2\dt} =  \infty$.
			\end{enumerate} 
	In particular, $w$ is an optimal Hardy-weight for the operator $Ly=-y''$ in $(0,2/a)$.
	\end{lem}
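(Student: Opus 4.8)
The plan is to check the three numbered assertions by elementary calculus and then read off the ``in particular'' part from Proposition~\ref{prop_2.10}, applied with $p\equiv 1$, $q\equiv 0$, on the interval $(a,b)=(0,2/a)$ and with the interior point $c:=1/a$.

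For (1), set $g(t):=2t-at^2=t(2-at)$, so that $g>0$ exactly on $(0,2/a)$ and $f_w=g^{1/2}$ is a well-defined positive function there. Differentiating twice, $f_w''=\tfrac12 g^{-1/2}g''-\tfrac14 g^{-3/2}(g')^2$, and since $g'=2(1-at)$, $g''=-2a$, one has the elementary pointwise identity $\tfrac14(g')^2-\tfrac12 g''g=(1-at)^2-\big(-2at+a^2t^2\big)=1$; hence $-f_w''=g^{-3/2}=g^{1/2}g^{-2}=w f_w$ on $(0,2/a)$, and $f_w>0$ there. Equivalently, $f_w=\sqrt{2t-at^2}$ is the Ermakov--Pinney solution from Section~\ref{sec_EP_eq} corresponding to $q=0$, $k=1$, $v_1\equiv 1$, $v_2(t)=t$, $W(v_1,v_2)=1$, and $(c_1,c_2,c_3)=(0,-a,1)$, which satisfies $c_3^2-c_1c_2=1$; the admissibility set is then exactly $(0,2/a)$, and $-y''=1/y^3=w f_w$.

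For (2), use $f_w^2=g=t(2-at)$ together with the partial-fraction decomposition $f_w^{-2}=\tfrac{1}{2t}+\tfrac{a}{2(2-at)}$; an antiderivative on $(0,2/a)$ is $\tfrac12\log\!\big(t/(2-at)\big)$, which tends to $-\infty$ as $t\to 0^+$ and to $+\infty$ as $t\to(2/a)^-$. Thus $\int_0^{1/a}f_w^{-2}\dt=\int_{1/a}^{2/a}f_w^{-2}\dt=\infty$. Assertion (3) is then immediate, because $w f_w^2=f_w^2 f_w^{-4}=f_w^{-2}$, so the two integrals in (3) equal the corresponding ones in (2).

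Finally, for the concluding statement: $w=(2t-at^2)^{-2}$ is continuous and strictly positive on $(0,2/a)$, hence $0\lneqq w\in L^1_{\mathrm{loc}}(0,2/a)$, and by (1) the positive function $f_w$ solves $(L-w)f_w=0$ on $(0,2/a)$. The four divergences in (2)--(3) continue to hold with $c=1/a$ replaced by any $c\in(0,2/a)$, since $f_w$ (and hence $f_w^{-2}$ and $w f_w^2$) is continuous and strictly positive on every compact subinterval of $(0,2/a)$, so moving the intermediate point alters each integral by only a finite amount. Therefore all hypotheses of Proposition~\ref{prop_2.10} hold, and $w$ is an optimal Hardy-weight for $Ly=-y''$ in $(0,2/a)$. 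There is essentially no obstacle here; the only points worth a word of care are that the square root defining $f_w$ is real and positive precisely on $(0,2/a)$ (so this is the natural maximal interval for the statement), and that Proposition~\ref{prop_2.10} is phrased for all interior points $c$, which we reduce to the single choice $c=1/a$ by the continuity remark just made.
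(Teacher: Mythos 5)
Your proof is correct and takes essentially the approach the paper intends: the paper states the lemma immediately after the sentence ``Using Proposition~\ref{prop_2.10} we obtain,'' and supplies no written proof, treating it as a direct verification of the two ODE/integral identities and an application of Proposition~\ref{prop_2.10}. Your computation of $-f_w''=g^{-3/2}=wf_w$ via the identity $\tfrac14(g')^2-\tfrac12 g''g=1$, the partial-fraction evaluation of the logarithmic divergences in (2), the observation $wf_w^2=f_w^{-2}$ for (3), and the remark that the four divergences are insensitive to the choice of interior point $c$ (so the hypotheses of Proposition~\ref{prop_2.10} hold for every $c\in(0,2/a)$) are exactly what is needed; the added Ermakov--Pinney identification is a correct but optional aside.
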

	\begin{rem} \em{
			 We exclude the case $a\leq 0$ since we are interested in Hardy-weights  greater than the classical Hardy-weight $w_{\mathrm{class}}(t)=(2t)^{-2}.$
			}
	\end{rem}
	The following proposition is an important ingredient in the construction of an improved optimal weight for the nonsymmetric case. 
	\begin{proposition}\label{SL_properties}
		
			Let $w(t)=(2t-at^2)^{-2}$. Then for any $\xi,M,a>0$, and $t\in (0,2/a )$ the function 
			$$u_{\xi}(t):=\sqrt{2t-at^2}\cos\left(\dfrac{\xi}{2}\log \left (\dfrac{Mt}{2-at} \right )\right)$$ satisfies the following properties:
			\begin{enumerate}
				\item $ -u_{\xi}''-(1+\xi^2)wu_{\xi}=0 \quad \mbox{in } (2/(M\mathrm{e}^{\pi/\xi}+a), 2/(M+a))$,\\
				\item The boundary condition: $u_\xi ' (2/(M+a))=\dfrac{M^2-a^2}{4M}u_\xi (2/(M+a))$,\\[1mm]
				\item The boundary condition: $u_{\xi}\! \left(2/(M\mathrm{e}^{\pi/\xi}+a)\right)=
				u_{3\xi}\! \left(2/(M\mathrm{e}^{\pi/\xi}+a)\right)=0$,
				\item $u_{\xi}$ converges pointwise to $\sqrt{2t-at^2}$ as $\xi \to 0$,
				\item $| u_{\xi}(t)| \leq \sqrt{2t-at^2}.$
			\end{enumerate}
	\end{proposition}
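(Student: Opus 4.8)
The plan is to treat each of the five items as a short direct verification, organized around one structural observation. Write $f(t):=\sqrt{2t-at^{2}}$ for the function $f_w$ of Lemma~\ref{3pinney}, so that $-f''=wf=1/f^{3}$ and $w=1/f^{4}$ on $(0,2/a)$, and set $g(t):=\log\big(Mt/(2-at)\big)$, so that $u_{\xi}=f\cos(\tfrac{\xi}{2}g)$. The key identity is $g'(t)=\tfrac1t+\tfrac{a}{2-at}=\tfrac{2}{2t-at^{2}}=2/f(t)^{2}$. I would also record at the outset that $f,g\in C^{\infty}(0,2/a)$ and that $0<2/(M\mathrm{e}^{\pi/\xi}+a)<2/(M+a)<2/a$ whenever $M,\xi,a>0$, so the interval appearing in item (1) is a nonempty subinterval of $(0,2/a)$ on which $u_{\xi}$ is smooth.

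For item (1), I would differentiate $u_{\xi}=f\cos(\tfrac{\xi}{2}g)$ twice, replacing $g'$ by $2/f^{2}$ at each step; the first-order cross terms cancel, leaving $u_{\xi}''=(f''-\xi^{2}/f^{3})\cos(\tfrac{\xi}{2}g)$, and using $-f''=1/f^{3}$ and $w=1/f^{4}$ this rearranges to $-u_{\xi}''=(1+\xi^{2})f^{-3}\cos(\tfrac{\xi}{2}g)=(1+\xi^{2})wu_{\xi}$, which is the claimed equation (valid on all of $(0,2/a)$, hence on the stated interval). Conceptually this is the reduction-of-order/Pr\"ufer mechanism: if $\sigma$ is an antiderivative of $1/f^{2}$ then $g=2\sigma+\mathrm{const}$, and the substitution $u=f\,h(\sigma)$ turns $\big(L-(1+\xi^{2})w\big)u=0$ into the harmonic-oscillator equation $h''+\xi^{2}h=0$, whose solution $h=\cos(\xi\sigma+\mathrm{const})$ reproduces $u_{\xi}$.

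Items (2) and (3) reduce to evaluating $g$ at the two endpoints. At $t_{1}:=2/(M+a)$ one computes $Mt_{1}/(2-at_{1})=1$, so $g(t_{1})=0$; hence $u_{\xi}(t_{1})=f(t_{1})$, and since $u_{\xi}'=f'\cos(\tfrac{\xi}{2}g)-\tfrac{\xi}{f}\sin(\tfrac{\xi}{2}g)$ with $g(t_{1})=0$, also $u_{\xi}'(t_{1})=f'(t_{1})$. From $f'=(1-at)/f$ one gets $f(t_{1})=2\sqrt{M}/(M+a)$ and $f'(t_{1})=(M-a)/(2\sqrt{M})$, whence $f'(t_{1})=\tfrac{M^{2}-a^{2}}{4M}f(t_{1})$, which is (2). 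At $t_{0}:=2/(M\mathrm{e}^{\pi/\xi}+a)$ one computes $Mt_{0}/(2-at_{0})=\mathrm{e}^{-\pi/\xi}$, so $g(t_{0})=-\pi/\xi$; therefore $\tfrac{\xi}{2}g(t_{0})=-\pi/2$ and $\tfrac{3\xi}{2}g(t_{0})=-3\pi/2$ are odd multiples of $\pi/2$, the cosine factors vanish, and $u_{\xi}(t_{0})=u_{3\xi}(t_{0})=0$, which is (3). Then (5) is immediate from $|u_{\xi}(t)|=f(t)\,|\cos(\tfrac{\xi}{2}g(t))|\le f(t)=\sqrt{2t-at^{2}}$, and (4) follows since for fixed $t\in(0,2/a)$ the value $g(t)$ is finite, so $\tfrac{\xi}{2}g(t)\to 0$ and $u_{\xi}(t)\to f(t)=\sqrt{2t-at^{2}}$ as $\xi\to 0^{+}$.

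I do not anticipate a genuine obstacle: the proposition is a chain of explicit verifications whose only nontrivial inputs are the identity $g'=2/f^{2}$ and the Ermakov--Pinney relation $-f''=1/f^{3}$ from Lemma~\ref{3pinney}, which together make item (1) essentially a change of variables; items (2)--(5) are then elementary algebra and a one-variable limit. If anything requires care, it is merely bookkeeping the endpoint evaluations of $g$ and confirming that the interval in (1) is nonempty, both of which I have sketched above.
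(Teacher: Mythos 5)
Your proposal is correct, and it follows essentially the same route as the paper: the paper's proof consists of the one-line remark that the assertions "can be verified straightforwardly" together with the Appendix, which derives $u_\xi$ via the Liouville substitution $y=w\sqrt{\rho^{-1/2}}$, $s=\int\sqrt{\rho}$, reducing $-y''-(1+\xi^2)wy=0$ to $-w''=\xi^2 w$. Your direct twice-differentiation using $g'=2/f^2$ and $-f''=1/f^3$ (with the observation that the first-order cross terms cancel) is exactly this computation unwound, and you explicitly note the Prüfer/Liouville-normal-form interpretation that the paper's Appendix uses; the endpoint evaluations of $g$, the bound $|\cos|\le1$, and the $\xi\to0$ limit match what the paper intends by "straightforward verification."
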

	\begin{proof}
		The assertions of the proposition can be verified straightforwardly, see Appendix for a method to obtain the solutions $u_\xi$. 
	\end{proof}
	We proceed to find a family of optimal Hardy-weights in $\Gw$ greater (in a neighborhood of $\infty$) than $W_{\mathrm{class}}$, the classical optimal weight obtained in \cite{KP} which satisfies  
	$$
	W_{\mathrm{class}}=\dfrac{|\nabla(G_\varphi /u)|_{A}^2}{4(G_{\varphi} /u)^2} \qquad \mbox{in } \Gw \setminus \supp(\varphi).
	$$
	
	\begin{thm}\label{3.7} 
			Let  $\Omega \subset \mathbb{R}^n$, $n \geq 2$,  be a domain containing $x_0$. Let $P$ be a subcritical  linear operator  in $\Omega$ and let $G(x):=G_{P}^{\Omega}(x,x_0)$ be its  minimal positive Green function with singularity at $x_0$,  and let $G_{\varphi}$ the Green potential with  density $0\lneqq \varphi \in \core$  (see \eqref{green potential}). Assume that there exists $u\in \mathcal{C}_P(\Omega)$  satisfying
			\begin{equation}\label{eq:G/u_tends_1}
			\lim_{x \to \overline{\infty}} \dfrac{G(x)}{u(x)}=0.
			\end{equation}
			Let $$0<a<\dfrac{1}{\sup\limits_{\Gw}{(G_{\varphi} /u)}}\, ,\quad
			f_w(t):=\sqrt{2t-at^2}.
			$$ 
			Then
			$$
			W:=\frac{P \left (uf_w \left ( G_{\varphi} /u\right) \right )}{uf_w \left ( G_{\varphi} /u\right)}
			$$
			is an optimal Hardy-weight of $P$ in $\Gw$, and
			$$W=\frac{|\nabla(G_{\varphi}/u)|_{A}^2}{(G_{\varphi} /u)^2(2-a(G_{\varphi}/u))^2}\geq W_{\mathrm{class}}:=\frac{|\nabla(G_\varphi /u)|_{A}^2}{4(G_{\varphi} /u)^2} \quad  \text{in }  \Gw \setminus \supp(\varphi).$$
		
	\end{thm}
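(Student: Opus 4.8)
The plan is to verify the three defining properties of an optimal Hardy-weight — nonnegativity together with the stated formula, criticality of $P-W$, and null-criticality of $P-W$ with respect to $W$ at each end — by reducing everything to one dimension through the substitution $t=G_\varphi/u$, exactly as in the proof of Theorem~\ref{criticality_linear}, but now fed with the explicit Ermakov--Pinney data $f_w(t)=\sqrt{2t-at^2}$, $w(t)=(2t-at^2)^{-2}$ of Lemma~\ref{3pinney}. The hypothesis $0<a<1/\sup_\Omega(G_\varphi/u)$ forces $G_\varphi/u$ to take values in $(0,1/a)\subset(0,2/a)$, so $f_w,w$ are smooth and strictly positive along $G_\varphi/u$ and $f_w'(t)=(1-at)/f_w(t)>0$ there; in particular the monotonicity of $f_w$ on the range of $G_\varphi/u$ — which had to be assumed in Theorem~\ref{criticality_linear} — holds here for free. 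Applying the pointwise identity \eqref{Puv} with the supersolution $G_\varphi$ (so $PG_\varphi=\varphi\ge0$) and $f_w''=-wf_w$ from Lemma~\ref{3pinney}(1), one gets $Pv=u\,w(G_\varphi/u)f_w(G_\varphi/u)|\nabla(G_\varphi/u)|_A^2=Wv$ on $\Omega\setminus\supp(\varphi)$, which is the stated formula for $W$, while on $\supp(\varphi)$ one gets $Pv=-uf_w''(G_\varphi/u)|\nabla(G_\varphi/u)|_A^2+uf_w'(G_\varphi/u)\varphi\ge0$ since $f_w''\le0$ and $f_w'>0$; hence $W\ge0$ on $\Omega$ and $W\not\equiv0$ ($G_\varphi/u$ is non-constant, otherwise $\varphi=PG_\varphi\equiv0$). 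Finally $0<2-a(G_\varphi/u)<2$ gives at once $W\ge W_{\mathrm{class}}$.

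Next I would establish criticality of $P-W$ with ground state $v:=uf_w(G_\varphi/u)$ by repeating the corresponding step of the proof of Theorem~\ref{criticality_linear} with Lemma~\ref{3pinney} replacing the membership $w\in\mathcal{W}_1(\R_+)$; only the behaviour of $f_w,w$ as $t\to0^+$ is used, since $G_\varphi/u\to0$ at $\overline{\infty}$ (the hypothesis together with $G_\varphi\asymp G$ near $\overline{\infty}$), whereas on the compact set $\supp(\varphi)$ the ratio $G_\varphi/u$ stays in a compact subinterval of $(0,1/a)$. Since $\int_0^{1/a}f_w^{-2}\,\dt=\infty$ by Lemma~\ref{3pinney}(2), the function $f_1(t):=f_w(t)\int_t^{1/a}f_w(s)^{-2}\,\ds$ is a second, linearly independent positive solution of $(L-w)y=0$ on $(0,1/a)$ with $f_w(t)/f_1(t)\to0$ as $t\to0^+$; then $h:=uf_1(G_\varphi/u)\in\mathcal{C}_{P-W}(\Omega\setminus\supp(\varphi))$ again by \eqref{Puv}, and $v/h\to0$ at $\overline{\infty}$. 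The Khas'minski\u{i}-type criterion \cite[Proposition~6.1]{DFP} then gives that $v$ is of minimal growth in a neighbourhood of infinity, hence a ground state, so $P-W$ is critical in $\Omega$ by Theorem~\ref{thm_crit}.

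The remaining point — null-criticality of $P-W$ with respect to $W$ at each end, without assuming $P$ symmetric — is the only genuinely new ingredient, and I expect it to be the main obstacle: the symmetric proof of Theorem~\ref{criticality_linear}(4) used Green's formula $\int_\Omega(P(u)G_\varphi-P(G_\varphi)u)\dx=$ flux difference, which is unavailable for nonsymmetric $P$. Here Proposition~\ref{SL_properties} is the substitute. For every $\xi>0$, the function $u\,u_\xi(G_\varphi/u)$ is a sign-changing solution of $P-(1+\xi^2)W$ in every neighbourhood of $\overline{\infty}$ contained in $\Omega\setminus\supp(\varphi)$ (using that $u_\xi$ oscillates as $t\to0^+$ and that $\min_{\supp(\varphi)}(G_\varphi/u)>0$), so the generalized maximum principle rules out $P-(1+\xi^2)W\ge0$ near $\overline{\infty}$; this yields $\lambda_\infty(P,W,\Omega)=1$, i.e. optimality at infinity. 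To upgrade this to true null-criticality I would exploit that $W/W_{\mathrm{class}}=4/(2-a(G_\varphi/u))^2\to1$ and $v/\sqrt{G_\varphi u}=\sqrt{2-a(G_\varphi/u)}\to\sqrt2$ as $x\to\overline{\infty}$: thus $W$, $v$, and the relevant ground states of $P$ and of $P^\star$ are all comparable near each end to their counterparts for the classical weight, and since $W_{\mathrm{class}}$ is an optimal — hence null-critical — Hardy-weight for $P$ even in the nonsymmetric case (Theorem~\ref{thm:DFP}), the divergence of $\int_E W_{\mathrm{class}}\sqrt{G_\varphi u}\,\phi^*_{\mathrm{class}}\dx$ transfers to $\int_E Wv\,\phi^*\dx=\infty$ for every end $E$. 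The delicate step I would allot the most care to is precisely this comparison — controlling the adjoint ground state $\phi^*$ of $P^\star-W$ against that of $P^\star-W_{\mathrm{class}}$ when the two potentials only agree asymptotically — and here the explicit solutions $u_\xi$, together with the coincidence of a zero of $u_\xi$ with a zero of $u_{3\xi}$ (Proposition~\ref{SL_properties}(3)), are what provide the quantitative control of $P^\star-(1+\xi^2)W$ near $\overline{\infty}$ needed to pin $\phi^*$ down. Granting null-criticality, $P-W$ is critical and null-critical at each end, so $W$ is an optimal Hardy-weight for $P$ in $\Omega$, and $W\ge W_{\mathrm{class}}$ was shown in the first step.
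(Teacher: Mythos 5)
Your treatment of the nonnegativity and the formula for $W$, of the criticality of $P-W$ via the Khas'minski\u{i} criterion, and of optimality at infinity via the oscillatory solutions $u\,u_\xi(G_\varphi/u)$, is correct and mirrors what the paper does (the optimality-at-infinity step, though not explicitly spelled out in the paper's proof of this theorem, follows as you indicate from the proof of Theorem~\ref{criticality_linear}). The $W\geq W_{\mathrm{class}}$ comparison from $0<2-a(G_\varphi/u)<2$ is also fine.

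The genuine gap is in the null-criticality step. You propose to transfer the divergence of $\int_E W_{\mathrm{class}}\,v_{\mathrm{class}}\,\phi^*_{\mathrm{class}}\,\dx$ to $\int_E W v\,\phi^*\,\dx$ by asymptotic comparison, using $W/W_{\mathrm{class}}\to 1$ and $v/v_{\mathrm{class}}\to\sqrt{2}$. The first two factors do compare, but the adjoint ground states $\phi^*$ (of $P^\star-W$) and $\phi^*_{\mathrm{class}}$ (of $P^\star-W_{\mathrm{class}}$) are \emph{not} known to be comparable near $\overline{\infty}$, and this is not a small technicality: the two potentials agree only asymptotically (the relative error is $O(G_\varphi/u)$, which may decay slowly), and ground states of critical operators are notoriously sensitive to such perturbations --- e.g. perturbing the inverse-square Hardy potential on $\R^n\setminus\{0\}$ by a term that is $o(1)$ relative to the leading one can change the ground state's asymptotics by logarithmic factors. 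You flag this as ``the delicate step,'' invoke the $u_\xi$ and the zero-coincidence of Proposition~\ref{SL_properties}(3) as providing ``quantitative control,'' but do not actually produce the comparison; as written, the null-criticality claim is unproven, and I do not see how the oscillatory generalized eigenfunctions of $P-(1+\xi^2)W$ can be turned into a two-sided asymptotic bound on the \emph{adjoint} ground state $\phi^*$ without substantial extra work.

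The paper sidesteps the comparison entirely by a Green's-identity argument on the level-set annuli $\Omega_\xi=\{2/(M\mathrm{e}^{\pi/\xi}+a)<G_\varphi/u<2/(M+a)\}$. It introduces $\phi_\xi:=u\,u_\xi(G_\varphi/u)$, $\phi_{3\xi}:=u\,u_{3\xi}(G_\varphi/u)$, and the solution $\phi^*_\xi$ of a Dirichlet problem for $P^\star-W$ in $\Omega_\xi$ with data $\phi^*_0$ on the outer boundary $\Gamma$ and $0$ on the inner boundary $\Gamma_\xi$. Green's formula gives $\xi^2\int_{\Omega_\xi}W\phi_\xi\phi^*_\xi=B.T.(\phi_\xi,\phi^*_\xi)$ and $9\xi^2\int_{\Omega_\xi}W\phi_{3\xi}\phi^*_\xi=B.T.(\phi_{3\xi},\phi^*_\xi)$, and the two boundary terms coincide because (by Proposition~\ref{SL_properties}(2)--(3)) $\phi_\xi$ and $\phi_{3\xi}$ have the same value and normal derivative on $\Gamma$ and both vanish on $\Gamma_\xi$. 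Dividing by $\xi^2$ and letting $\xi\to 0$ with dominated convergence forces $9\int W\phi_0\phi^*_0=\int W\phi_0\phi^*_0$ if $P-W$ were positive-critical, a contradiction. This is why the solutions $u_\xi$ and, crucially, the common zero of $u_\xi$ and $u_{3\xi}$, are needed: they feed an identity, not an estimate, and no asymptotic information about $\phi^*$ beyond its positivity and monotone approximation by $\phi^*_\xi$ is required. Your outline uses these same ingredients but for a different (and, as written, incomplete) purpose.
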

	\begin{proof} 
		Obviously, $aG_{\varphi}/u < 1$,
		therefore, the function $W$ is well-defined and by \eqref{Puv} $W$ is nonnegative in $\Gw$ since $f'_w(t)>0$ for $0<at<1$.
		
		We first prove the criticality of $P-W$ in $\Gw$.
		Let $f_1(t):=f_w(t)\int\limits_{t}^{1} (f_w)^{-2}\ds$. Since
		$$
		\lim_{t \to 0} \frac{f_w(t)}{f_1(t)}=0,
		$$
		and $uf_1(G_{\varphi}/u)$ is a positive solution of $P-W$ in a neighborhood of $\bar \infty$,
		it follows the Khas'minski\u{i} criterion \cite[Proposition~6.1]{DFP}  that  the function $\phi_0:=uf_w(G_{\varphi}/u)$ is a positive solution of the equation  $(P-W)v=0$ in $\Omega$ having minimal growth   in a neighborhood of infinity in $\Omega$, hence, $P-W$ is critical in $\Gw$.
	
	\medskip	
	
		Next we prove the null-criticality of $P-W$ with respect to $W$.
		Fix a sufficiently large constant $M>0$  such that 
		\begin{equation}\label{zar}
		\left \{x\in\Gw \left|   \; 0<G_{\varphi}/u<2/M\right. \right  \}\cap
		\supp(\varphi)=\emptyset.
		\end{equation}
		Let $\phi_0$ and $\phi_0^*$ be the ground-states of $P-W$ and $P^* -W$ respectively. 
		Let 
		$$\Gw_{\xi}:=\left \{x\in\Gw \left|   \; \frac{2}{M\mathrm{e}^{\pi/\xi }+a }<\frac{G_{\varphi}(x)}{u(x)}<\frac{2}{M+a} \right. \right \}, $$
$$\Gg:=\left\{ x\in \Gw\mid \frac{G_{\varphi}(x)}{u(x)}=\dfrac{2}{M+a} \right\}, \;\;
\Gg_\xi:=\left\{ x\in \Gw\mid \frac{G_{\varphi}(x)}{u(x)}= \frac{2}{M\mathrm{e}^{\pi/\xi }+a }\right\},$$
and assume for the moment that $\Gg$ and $\Gg_\xi$ are regular hypersurfaces. 
 Let $\phi^{*}_{\xi} $ be the positive solution of the Dirichlet problem:
		\begin{equation}\label{phi-xi*}
		\begin{cases}
		(P^*-W)u=0 & \text{in } \Gw_{\xi}, \\[3mm]
		u(x)=\phi_{0}^* & \text{on } \Gg, \\[3mm]
		u(x)=0 & \text{on }   \Gg_\xi . 
		\end{cases}
		\end{equation}
		Let $$\phi_{\xi}: =u u_{\xi}(G_{\varphi} /u),$$
		where $$u_{\xi}(t)=\sqrt{2t-at^2} \cos \left(\dfrac{\xi}{2} \log \left(\dfrac{Mt}{2-at} \right )\right ).$$
		By \eqref{zar}, $\Gw_{\xi}\cap \supp(\varphi)=\emptyset$, and therefore, 
		$$
		P(\phi_{\xi})=(1+\xi^2)W \phi_{\xi} \qquad  \text{in }  \Gw_{\xi}.
		$$
		Moreover, $|\phi_{\xi}| \leq \phi_0$, and $\phi_{\xi} \to \phi_{0}$ pointwise in $\Gw$  as $\xi \to 0$.
		In addition,  by \cite[Theorem 8.2]{DFP}, $0<\phi_{\xi}^* \leq \phi_0^*$ in $\Gw_\xi$, and  as $\xi \to 0$,  $\phi_{\xi}^* \to \phi_{0}^*$ pointwise in  
		$$\tilde{\Gw}:=\left\{x\in \Gw \mid  \frac{G_{\varphi}(x)}{u(x)}<\dfrac{2}{M+a}  \right\}.$$
		 
		Lemma \ref{SL_properties} and Green's formula imply 
		\begin{align}
		\xi^2\int_{\Gw_\xi} W \phi_{\xi} \phi_{\xi}^*&= 
		\int_{\Gw_{\xi}}\phi_{\xi}^*(P-W)\phi_\xi \dx= B.T(\phi_{\xi},\phi_{\xi}^*),
	 \nonumber\\ 
	 9\xi^2\int_{\Gw_\xi}  W  \phi_{3\xi} \phi_{\xi}^* \dx &= \int_{\Gw_{\xi}}\phi_{\xi}^*(P-W)\phi_{3\xi} \dx=
		B.T(\phi_{3\xi},\phi_{\xi}^*), \label{25}
		\end{align}
		where the boundary terms are
		\begin{align*}
		B.T(\phi_{\xi},\phi_{\xi}^*)&=
		\int_\Gg \,\langle
		\phi_\xi^*A\nabla\phi_\xi -\phi_\xi A\nabla \phi_{\xi}^* + \phi_\xi \phi_\xi^*(\bb -\bt), \vec{\sigma} 
		\rangle \text{d}\sigma,\\
		B.T(\phi_{3\xi},\phi_{\xi}^*)&=
		\int_\Gg \,\langle
		\phi_\xi^*A\nabla\phi_{3\xi} -\phi_{3\xi} A\nabla \phi_{\xi}^* + \phi_{3\xi} \phi_\xi^*( \bb -\bt), \vec{\sigma} 
		\rangle \text{d}\sigma,
			\end{align*}
			and where $\vec{\sigma}$ is a normal vector (in the metric $|\cdot  |_A$) to $\Gg$.  Using the boundary condition on $\Gg$ (see Proposition \ref{SL_properties} and \eqref{phi-xi*}), we have
			\begin{align*}
				&  B.T(\phi_{\xi},\phi_{\xi}^*)\!= \!\int_\Gg \langle
		\phi_0 \phi_0^* A\nabla u \!+\!\frac{M^2-a^2}{4M}u \phi_0\phi_0^* \nabla(G_{\phi}/u)\!-\!\phi_0A\nabla \phi_{\xi}^*  \!+\! \phi_0 \phi_0^*(\bb \!-\!\bt), \vec{\sigma} 
		\rangle \text{d}\sigma\\ &
		= B.T(\phi_{3\xi},\phi_{\xi}^*),
	\end{align*}
	where the last equality follows since the integrand  in the above expression is independent of the choice of either  $\phi_\xi$ or $\phi_{3\xi}$.

	 Hence, by \eqref{25} we have
	\begin{equation}\label{24}
	9\xi^2\int_{\Gw_\xi}  W \phi_{3\xi} \phi_{\xi}^* \dx=
	\xi^2\int_{\Gw_\xi}  W  \phi_{\xi} \phi_{\xi}^* \dx .
	\end{equation}
	For nonregular $\Gw_\xi$, we obtain \eqref{24} by an approximation argument.
	
		Assume by contradiction  that $P-W$ is positive-critical (i.e., $ \int_{\tilde \Gw} W \phi_0 \phi_0^* \dx<\infty$). Dividing \eqref{24} by $\xi^2$ and
		letting 
		$\xi \to 0$, the dominated convergence theorem implies
		$$
		9\int_{\tilde \Gw} W\phi_0 \phi_0^* \dx=\int_{\tilde \Gw} W \phi_0 \phi_0^* \dx,
		$$
		which is a contradiction. 
	\end{proof}
\begin{rem}
	\em{
	The constant  $a$ in Theorem \ref{3.7} was arbitrarily chosen such that $G_{\varphi}/u<1/a$ in $\Gw$. In fact, we may choose any constant $a>0$ satisfying $G_{\varphi}/u \leq 1/a$ in $\Gw$.
}
\end{rem}
Finally, using the method in \cite{Agmon, DFP}, we  establish a  new Rellich-type inequality involving the difference between the Hardy-weight of Theorem~\ref{3.7} and the ``classical" one given by Theorem~\ref{thm:DFP}.   
\begin{theorem}
	Let  $\Omega \subset \mathbb{R}^n$, $n \geq 2$,  be a domain containing $x_0$. Let $P$ be a symmetric subcritical  linear operator  in $\Omega$ and let $G(x):=G_{P}^{\Omega}(x,x_0)$ be its  minimal positive Green function with singularity at $x_0$. Consider  a Green potential   $G_{\varphi}$  as in \eqref{green potential} and  $u\in \mathcal{C}_P(\Omega)$ satisfying \eqref{Ancona}. Set
	$$0<a<\dfrac{2}{\sup\limits_{x\in \Gw}{\left(G_{\varphi}(x) /u(x)\right)}}\, ,$$
	and consider the ``classical" Hardy-weight and the Hardy-weight  given by Theorem \ref{3.7}, respectively, i.e.,    
$$
W_{\mathrm{class}}=\dfrac{|\nabla(G_{\varphi}/u)|_A^2}{4(G_{\varphi}/u)^2}\,, \quad W=\dfrac{|\nabla(G_{\varphi}/u)|_A^2}{(G_{\varphi}/u)^2(2-a(G_{\varphi}/u))^2}\,.
$$
 Then for any $\psi\in C_0^{\infty}(\Gw\setminus \supp(\varphi))$ the following Rellich-type inequality holds true
		\begin{align*} 
\int_{\Gw\setminus \supp(\varphi)}\dfrac{(P(\psi))^2(G_{\varphi}/u)}{W-W_{\mathrm{class}}} \dx
 \geq 
 	 \int_{\Gw \setminus \supp(\varphi)}
\psi^2 (W-W_{\mathrm{class}})(G_{\varphi}/u) 
 \dx.
\end{align*}	
\end{theorem}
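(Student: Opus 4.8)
The plan is to imitate the passage from a Hardy inequality to a Rellich inequality used in \cite{Agmon} and \cite{DFP}: starting from the Hardy inequality $P-W\ge 0$ furnished by Theorem~\ref{3.7}, square $P$, complete a square, and return the resulting first-order term to a Hardy inequality. Put $h:=G_\varphi/u$ and $\Gw':=\Gw\setminus\supp(\varphi)$. Since $P$ is symmetric and $Pu=0$, $PG_\varphi=\varphi$, Green's formula over the level layers $\{t_1<h<t_2\}\subset\Gw'$ (exactly as in the proof of Theorem~\ref{criticality_linear}) shows that $\diver(u^2A\nabla h)=0$ in $\Gw'$ and that the flux $\int_{\{h=t\}}u^2|\nabla h|_A\dsigma\equiv\beta>0$ is independent of $t\in(0,2/a)$; consequently the co-area formula collapses $\int_{\Gw'}g(h)\,|\nabla h|_A^2\,u^2\dx$ to $\beta\int_0^{2/a}g(t)\dt$. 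Moreover, on $\Gw'$ one has
$$W=\frac{|\nabla h|_A^2}{h^2(2-ah)^2}=|\nabla h|_A^2\,w(h),\qquad W_{\mathrm{class}}=\frac{|\nabla h|_A^2}{4h^2}=|\nabla h|_A^2\,w_{\mathrm{class}}(h),$$
with $w(t)=(2t-at^2)^{-2}$, $w_{\mathrm{class}}(t)=(2t)^{-2}$, and, by \eqref{Puv} with $g=G_\varphi$ (so that $Pg=\varphi=0$ on $\Gw'$), $P\big(uF(h)\big)=-u\,F''(h)\,|\nabla h|_A^2$ in $\Gw'$ for every smooth $F$.

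\emph{Reduction to one dimension.} Apply the ground-state transform relative to $u$: with $\psi=u\chi$ one has $P\psi=u\,\mathcal{M}\chi$, where $\mathcal{M}:=-u^{-2}\diver\big(u^2A\nabla\,\cdot\,\big)$ is symmetric in $L^2(\Gw,u^2\dx)$ and $\mathcal{M}h=0$ in $\Gw'$. Taking $\psi=uF(h)$ with $F\in C_0^\infty(0,2/a)$ and invoking the co-area collapse above, the asserted inequality becomes the one-dimensional weighted Rellich inequality
$$\int_0^{2/a}\frac{\big(F''(t)\big)^2\,t}{v_0(t)}\dt\ \ge\ \int_0^{2/a}v_0(t)\,t\,F(t)^2\dt ,\qquad v_0:=w-w_{\mathrm{class}}>0\ \text{on}\ (0,2/a).$$
For a general $\psi\in\core$ with $\supp\psi\subset\Gw'$ one reduces to this case by localizing along the level sets of $h$: the right-hand side depends only on the level-set averages of $\psi/u$, so, after a Cauchy--Schwarz estimate on each level set, it suffices to treat the ``radial'' competitor $uF(h)$. (Making this last reduction rigorous is a secondary technical point.)

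\emph{The one-dimensional inequality.} This is the heart of the matter. Fixing an auxiliary positive function $\lambda=\lambda(t)$ and completing the square,
$$\frac{(F'')^2\,t}{v_0}=\frac{t}{v_0}\big(F''+\lambda F\big)^2-2\frac{t\lambda}{v_0}F\,F''-\frac{t\lambda^2}{v_0}F^2 ,$$
one discards the first term, integrates the second by parts twice, and bounds the arising term $2\int_0^{2/a}\frac{t\lambda}{v_0}(F')^2\dt$ from below using the one-dimensional Hardy and (null-)criticality results of Section~\ref{sec-1d} and Lemma~\ref{3pinney} (criticality of $y\mapsto -y''-w\,y$ on $(0,2/a)$ with ground state $f_w=\sqrt{2t-at^2}$), which makes the choice $\lambda$ proportional to $w$ natural. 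One must choose $\lambda$ so that the surviving zeroth-order terms dominate $v_0\,t\,F^2$ with constant exactly $1$; the sharpness of this constant is forced by the null-criticality of $P-W$ with respect to $W$, and is attained in the limit by test functions concentrating at an endpoint of $(0,2/a)$, where the inequality degenerates into the sharp elementary Rellich inequality $\int s^2(\phi'')^2\ds\ge\tfrac1{16}\int s^{-2}\phi^2\ds$. Reassembling through the co-area formula and the ground-state transform then yields the theorem. The principal obstacle is this one-dimensional computation — identifying the auxiliary function $\lambda$ for which the completion of squares, fed back through the one-dimensional Hardy inequality, closes with precisely the weights $t/v_0$ and $v_0\,t$ and constant $1$; the remaining steps are routine.
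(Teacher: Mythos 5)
Your proposal takes a fundamentally different route from the paper's, and it has two genuine gaps that are not ``secondary technical points.''

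\emph{Gap 1: reduction to radial test functions.} You claim that since the right-hand side $\int\psi^2(W-W_{\mathrm{class}})(G_\varphi/u)\dx$ depends only on level-set averages of $\psi/u$, a Cauchy--Schwarz estimate on each level set reduces matters to competitors of the form $uF(G_\varphi/u)$. This does not work for a \emph{second-order} (Rellich) inequality. The left-hand side contains $(P\psi)^2$, which mixes transverse and tangential second derivatives of $\psi$; replacing $\psi$ by its level-set average $uF(h)$ is not a decreasing operation for $\int (P\psi)^2 h / (W-W_{\mathrm{class}})\dx$ in any obvious way, and no standard rearrangement or Jensen argument applies here as it does for the first-order Hardy case. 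Absent this reduction, the one-dimensional collapse via the co-area formula does not capture the full statement.

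\emph{Gap 2: the one-dimensional Rellich inequality is not proved.} Even for radial $\psi=uF(h)$, your argument for
\[
\int_0^{2/a}\frac{(F'')^2\,t}{v_0}\dt\ \ge\ \int_0^{2/a} v_0\,t\,F^2\dt,\qquad v_0=w-w_{\mathrm{class}},
\]
is only a sketch: you introduce an auxiliary function $\lambda$, complete a square, and say that finding the right $\lambda$ ``with precisely the weights $t/v_0$ and $v_0\,t$ and constant $1$'' is the principal obstacle and that ``the remaining steps are routine.'' But identifying such a $\lambda$ and verifying that the inequality closes with constant exactly $1$ \emph{is} the claim; leaving it unverified means the theorem is unproved.

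The paper's proof avoids both issues entirely. It works directly in $\Gw$ (after the ground-state transform to reduce to $P\mathbf 1=0$, $u=\mathbf 1$). The key step is the algebraic identity, quoted from (10.9) in \cite{DFP},
\[
\int P(\psi)\psi\,G_\varphi\dx
=\int P(\psi\sqrt{G_\varphi})\,\psi\sqrt{G_\varphi}\dx
+\tfrac12\int\psi^2 P(G_\varphi)\dx
-\int P(\sqrt{G_\varphi})\,\psi^2\sqrt{G_\varphi}\dx .
\]
For $\psi\in C_0^\infty(\Gw\setminus\supp\varphi)$ the middle term vanishes and $P(\sqrt{G_\varphi})\sqrt{G_\varphi}=W_{\mathrm{class}}G_\varphi$, so the identity becomes $\int P(\psi)\psi G_\varphi = \int P(\psi\sqrt{G_\varphi})\psi\sqrt{G_\varphi} - \int\psi^2 W_{\mathrm{class}}G_\varphi$. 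Applying the Hardy inequality $P-W\geq 0$ (Theorem~\ref{3.7}) to the test function $\psi\sqrt{G_\varphi}$ gives $\int P(\psi\sqrt{G_\varphi})\psi\sqrt{G_\varphi}\geq\int W\psi^2 G_\varphi$, whence $\int P(\psi)\psi G_\varphi\geq\int(W-W_{\mathrm{class}})\psi^2 G_\varphi\geq 0$. A single Cauchy--Schwarz,
\[
\Big(\int P(\psi)\psi G_\varphi\dx\Big)^{\!2}\le
\Big(\int\frac{(P\psi)^2 G_\varphi}{W-W_{\mathrm{class}}}\dx\Big)
\Big(\int\psi^2(W-W_{\mathrm{class}})G_\varphi\dx\Big),
\]
then closes the argument. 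No one-dimensional reduction, no radialization, and no auxiliary multiplier $\lambda$ are needed. If you want to rescue your outline, you would need to (a) prove the radialization claim for the Rellich functional (which I believe fails in general) and (b) actually carry out the one-dimensional completion of the square; it is considerably simpler to adopt the paper's direct identity-plus-Cauchy--Schwarz strategy.
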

\begin{proof}
	Assume first that $P({\bf 1})=0$ and let $u={\bf 1}$, and assume that $\lim_{x \to \overline{\infty}} G(x)=0$.
Recall that $W >W_{\mathrm{class}}$ in $C_0^{\infty}(\Gw\setminus \supp(\varphi))$. Consequently, for $\psi\in C_0^{\infty}(\Gw\setminus \supp(\varphi))$, we have (see (10.9) in  \cite{DFP}),
	\begin{align*}
	&
	\int _{\Gw\setminus \supp(\varphi)}P(\psi)\psi G_{\varphi} \dx= \\[2mm] &
	\int_{\Gw\setminus \supp(\varphi)}\!\!\!\!\!\!\!\! P(\psi\sqrt{G_{\varphi}})\psi\! \sqrt{G_{\varphi}}\! \dx +
	\!\frac{1}{2}\!\int_{\Gw\setminus \supp(\varphi)}\!\!\!\!\!\!\!\psi^2 P(G_{\varphi})\! \dx
	-\!\!\int_{\Gw \setminus \supp(\varphi)}\!\!\!\!\!\!\!  P(\sqrt{G_{\varphi}})\psi^2 \sqrt{G_{\varphi}} \!\dx= \\[2mm] & 
		\int_{\Gw \setminus \supp(\varphi)} P(\psi\sqrt{G_{\varphi}})\psi \sqrt{G_{\varphi}} \!\dx 
			- \int_{\Gw \setminus \supp(\varphi)}  \psi^2 W_{\mathrm{class}}G_{\varphi}\!\dx   
	\geq \\[2mm]&
	\int_{\Gw \setminus \supp(\varphi)} \psi^2 ( W-W_{\mathrm{class}} ) G_{\varphi} \dx \geq 0 ,
	\end{align*}
	where we used the inequality $P-W\geq 0$ to derive the fourth line.
	The Cauchy-Schwartz inequality implies
	$$
	\left(\!\int _{\Gw\setminus \supp(\varphi)}\!\!\!\!P(\psi)\psi G_{\varphi} \!\dx \!\!\right)^{\!\!2}\! \!\leq\!
	\left(\!\int_{\Gw\setminus \supp(\varphi)}\!\!\dfrac{(P\psi)^2G_{\varphi}}{(W\!-\!W_{\mathrm{class}}) } \!\dx\!\right)
	\!\! \left(\!\int_{\Gw\setminus \supp}\!\! \!\!\!\psi^2 ( W\!-\!W_{\mathrm{class}} )G_{\varphi}\! \dx \!\right)\!,
	$$
	and therefore, 
	\begin{align} \label{phitransform}
		\int_{\Gw\setminus \supp(\varphi)}\dfrac{(P\psi)^2G_{\varphi}}{(W-W_{\mathrm{class}})} \dx 
		 \geq 
		 \int_{\Gw \setminus \supp(\varphi)} \psi^2 ( W-W_{\mathrm{class}} ) G_{\varphi} \dx.	
	\end{align}
	Now let $P$ be a general subcritical operator in $\Gw$ which is symmetric in $L^2(\Gw,\!\dx)$, and consider  the operator $P_u:={u}^{-1}Pu$ which is the  ground state transform of the operator $P$ (see for example, \cite[Section 4.1]{DFP}). Note that $P_u$ is symmetric in $L^2(\Gw,u^2\!\dx)$, and a Hardy-weight $W$ for $P$ corresponds  to the Hardy-weight $Wu^2$ for $P_u$.
	Moreover, the corresponding positive minimal Green function is given by  $G_{P_u}^{\Gw}(x,y)=G_{P}^{\Gw}(x,y)u(y)/u(x)$. Hence, for any $\varphi\in C_0^{\infty}(\Gw)$, we have
	$(G_{P_u}^{\Gw})_{\frac{\varphi}{u}}=(G_P^{\Gw})_{\varphi}/u$.

	Let $\psi:=u\vartheta$ where , $\vartheta\in \core$.  Recall that $P_u1=0$, consequently, using the obtained Rellich inequality for $P_u$, we get
	\begin{align*}
&	\int_{\Gw\setminus \supp(\varphi)}\!\!\!\dfrac{(P(\psi))^2(G_{\varphi}/u)}{W-W_{\mathrm{class}}} \!\dx=
	 \!\!\int_{\Gw\setminus \supp(\varphi)}\!\!\dfrac{(P_u(\vartheta))^2(G_{P_u}^{\Gw})_{\frac{\varphi}{u}}}{u^2(W-W_{\mathrm{class}})}\!\dx
	\geq \\[2mm]
&	\int_{\Gw \setminus \supp(\varphi)}\!\!\!
	(u\vartheta)^2 (W-W_{\mathrm{class}})(G_{P_u}^{\Gw})_{\frac{\varphi}{u}} 
	\!\dx=\!\!
	\int_{\Gw \setminus \supp(\varphi)}\!\!\!
	\psi^2 (W-W_{\mathrm{class}})(G_{\varphi}/u) \!\dx. \qedhere
	\end{align*}	
	\end{proof}
\section{Examples}
We illustrate our results with two explicit examples. 
\begin{example}
	\em{
Let $\Gw=\R^n, n \geq 3$, and consider the classical Hardy inequality
$$
\int\limits_{\R^n\setminus\{ 0\}}|\nabla u|^2 \dx \geq \left ( \dfrac{n-2}{2}\right )^2
\int\limits_{\R^n \setminus \{ 0\}}\dfrac{1}{4|x|^2}u^2 \dx \qquad  \forall u\in C_0^{\infty}(\R^n \setminus \{ 0\}).
$$
Let $G(x,y)=C_n|x-y|^{2-n}$ be the Green's function of $-\Delta$ in $\R^n$  and let $G(x):=G(x,0).$ It follows that  
$$
\left (\dfrac{n-2}{2} \right )^2\dfrac{1}{|x|^2}= \dfrac{1}{4}\left |\dfrac{\nabla G(x)}{G(x)} \right |^2
$$
is an optimal Hardy weight in $\R^n \setminus \{ 0\}$.

 Consider the classical Hardy-weight  for $-\Delta$ in $\R^n$, namely $W=-\Gd G_{\varphi}/G_\varphi$, where $G_{\varphi}$ is given by \eqref{green potential}. We have 
$$
W= \dfrac{1}{4}\left | \frac{\nabla G_{\varphi}}{G_{\varphi}}\right |^2 \qquad \mbox{in } \Gw \setminus \supp(\varphi).
$$
 Assume further that $\varphi$ is  rotationally invariant. Clearly, there exists $ C>0$ such that
$G_{\varphi}(x)=CG(x)$ for all $x\notin \supp(\varphi)$ (see for example,\cite[Theorem 9.7]{LL}).
Therefore,
$$
\dfrac{1}{4}\left | \frac{\nabla G_{\varphi}(x)}{G_{\varphi}(x)}\right |^2= 
\dfrac{1}{4}\left | \frac{\nabla G(x)}{G(x)}\right |^2 =\left (\dfrac{n-2}{2} \right )^2\dfrac{1}{|x|^2}  \qquad \mbox{in } \Gw \setminus \supp(\varphi). 
$$
Consequently, Theorem \ref{3.7} implies that there exists $a>0$ and an optimal  Hardy-weight $W$ for $-\Gd$ in $\R^n$  satisfying
	$$W(x)=\dfrac{|\nabla G_{\varphi}(x)|^2}{(2G_{\varphi}(x)-a(G_{\varphi}(x))^2)^2} >  \left (\dfrac{n-2}{2} \right )^2\dfrac{1}{|x|^2} \qquad  \text{in }  \Gw \setminus \supp(\varphi).$$
}
\end{example}
\begin{example}
\em{
	Let $c_1>0$, $c_2 \geq 0$ and $c_3=\sqrt{1+c_1c_2}\,$, and consider the following positive harmonic functions in $(0,L):$ 
	$$v_{1,0}(t):=\sqrt{2}\,t, \qquad v_{2,0}(t):=\dfrac{L-t}{\sqrt{2}L}.$$
	 For all $k\geq 0$, let 
	$$
	v_{1,k+1}=\sqrt{c_1v_{1,k}^2+c_2v_{2,k}^2+2c_3v_{1,k}v_{2,k}}\, ,\qquad
		v_{2,k+1}=	v_{1,k+1}\int\limits_t^L\dfrac{1}{v_{1,k+1}^2}\ds.
	$$
Then
\begin{align*}
G_{k+1}(t)\!\!&:=\!\!\!
\int_t^L\dfrac{1}{v_{1,k+1}^2}\ds=\!\!
\int_t^L
\dfrac{1}{v_{1,k}^2(s)}
\dfrac
{1}{c_1+c_2 \left (\displaystyle{\int_s^L}\dfrac{1}{v_{1,k}^2}\dz \right )^{\!\!2}+2c_3\displaystyle{\int_s^L}\dfrac{1}{v_{1,k}^2}\dz}\ds
\\ &
=
\int\limits_{0}^{G_k(t)} \dfrac{1}{c_1+c_2\tau^2+2c_3 \tau} \text{d}\tau=F(G_k)-F(0),
\end{align*}
where $F(\tau)=\displaystyle{\int} \dfrac{1}{c_1+c_2\tau^2+2c_3 \tau} \text{d}\tau$ and $G_0(t):=(L-t)/(2Lt)$. So, 
$$
G_{k}(t)=
\begin{cases}
F\left(G_{k-1}(t)\right)-F(0)   &  k\geq 1 ,\\[3mm]
\dfrac{L-t}{2Lt} &  k=0. 
\end{cases}
$$
By Proposition \ref{2.17}  the function 
$$
w(t):=\sum\limits_{k=1}^{\infty} \dfrac{1}{v_{1,k}^4}=
\sum\limits_{k=1}^{\infty} (G_{k}')^2 
$$
is a Hardy-weight for $Ly=-y''$ in $(0,L).$ Furthermore, if $0<c_1\leq 1/L$, then $$w>w_{\mathrm{class}}= (2t)^{-2}.$$ Moreover, if the assumptions of Lemma \ref{lemma 5.5} are satisfied for a subcritical  operator $P$ in $\Gw$, then  $w(G_{\varphi}/u)|\nabla (G_{\varphi}/u)|_A^2$ is a Hardy-weight for $P$ in $\Gw.$
We remark that the improved Hardy inequality obtained in \cite{FT} is recovered  
with $c_1=1/L$, $c_2=0$, and $c_3=1$. 
}
\end{example}

	\section*{Appendix}\label{eq:app}
In this short section we explain how to obtain the generalized eigenfunctions $u_\xi$ given in Proposition~\ref{SL_properties}. 	
Let $\rho,q\in L^1_{\text{loc}}(\alpha,\beta),~\lambda \in \R$ with $\rho>0$ in $(\ga,\gb)$, and consider the equation: 
$$
-y''(t)+q(t)y(t)=\lambda \rho(t) y(t) \qquad t\in(\alpha,\beta).
$$
The Liouville's substitution 
$$
y(t)=\frac{w(s)}{\sqrt[4]{\rho}}\,, \quad s(t)=\int_{\alpha}^{t} \sqrt{\rho(\zeta)}\text{d}\zeta, 
$$
yields Liouville's normal form (see for example \cite[Chapter 10.9]{BR})
$$
-w''(s)+\hat{q}(s)w(s)=\lambda w(s) \qquad s\in (0,s(\beta)),
$$
where 
$$
\hat{q}=-1+\frac{1}{4\rho} \left [ \left (\frac{\rho '}{\rho} \right )'-\frac{1}{4} \left ( \frac{\rho'}{\rho}\right )^2\right ].
$$
Note that $\hat{q}=0$ if and only if the function $v:=\rho^{-1/4}$ satisfies the Ermakov-Pinney equation 
$$-v''=\frac{1}{v^3}\, .$$ Moreover,  $v$ is a  positive solution of   the equation
$-y''-\dfrac{y}{v^4}=0$ having minimal growth near $0$ if and only if $v(0)=0$, namely, $v(t)=\sqrt{2t-at^2}$.

Now consider the equation 
$$
-y''-\frac{ y}{(2t-at^2)^2}=\xi^2\frac{ y}{(2t-at^2)^2} \qquad 
t\in (2/(a+M\text{e}^{\pi/\xi}),2/(M+a)).
$$
where $a,M,\xi$ are given in Proposition \ref{SL_properties}. 
The Liouville's substitution 
$$
y(t)=w\sqrt{2t-at^2}, \quad s=\int_{2/(a+M\text{e}^{\pi/\xi})}^{t} (2\zeta-a\zeta^2)^{-1}\text{d}\zeta 
=\frac{1}{2}\log \left (\frac{Mt}{2-at} \right )+
\frac{\pi}{2 \xi}
$$
implies $-w''=\xi^2 w$, and therefore,
$w=C_1\cos(\xi s)+C_2\sin(\xi s)$. The boundary conditions given by
Proposition \ref{SL_properties} then imply
$$
u=C_1\sqrt{2t-at^2}\sin\left(\frac{\xi}{2}\log \left (\frac{Mt}{2-at} \right )+\frac{\pi}{2} \right )
=C_1\sqrt{2t-at^2}\cos\left(\frac{\xi}{2}\log \left (\frac{Mt}{2-at} \right ) \right ).
$$

\medskip

\begin{center}
{\bf Acknowledgments}
\end{center}
The authors wish to express their sincere gratitude to Martin Fraas who initiated the main problem studied in this paper.  The  authors  acknowledge  the  support  of  the  Israel  Science Foundation (grants No.  970/15 and 637/19) founded by the Israel Academy of Sciences and Humanities. 
\bibliographystyle{apa}

\begin{thebibliography}{9}
			\bibitem{AG}
			S.~Agmon, ``Lectures on Exponential Decay of Solutions of Second-Order Elliptic Equations: Bounds
		on Eigenfunctions of $N$-body Schr\"odinger Operators", Math. Notes, vol. 29, Princeton University Press, Princeton, 1982.
		
		\bibitem{Agmon}  S.~Agmon, On positivity and decay of solutions of second order
		elliptic equations on Riemannian manifolds, {\em in} ``Methods of
		Functional Analysis and Theory of Elliptic Equations" (Naples,
		1982), pp. 19--52, Liguori, Naples, 1983.
			
			\bibitem{Ancona01} A.~Ancona, Negatively curved manifolds, elliptic operators, and the Martin boundary, {\em Ann. of Math.} {\bf 125} (1987),  495--536.
	
			\bibitem{Ancona02} A.~Ancona, Some results and examples about the behavior of harmonic functions and Green's functions with respect to second order elliptic operators, {\em Nagoya Math. J.} {\bf 165} (2002), 123--158.
			
			\bibitem{BEL} A.~A.~Balinsky, W.~D.~Evans, and R.~T.~Lewis: ``The Analysis and Geometry of Hardy's Inequality", Universitext. Springer, Cham, 2015.
				
					\bibitem{Barbatis} G.~ Barbatis, S.~ Filippas, and A.~Tertikas, 
				Series expansion for $L^p$ Hardy inequalities, \textit{Indiana Univ. Math. J.} \textbf{52} (2003), 171--190. 
				
					\bibitem{BFT}	G.~Barbatis, S.~Filippas, and A.~Tertikas, 
				Sharp Hardy and Hardy-Sobolev inequalities with point singularities on the boundary,\textit{ J. Math. Pures Appl.}  \textbf{117} (2018), 146--184.  
				\bibitem{BR}
				 G.~Birkhoff, and G.~Rota, "Ordinary Differential Equations", Fourth Edition, John Wiley and Sons, Inc., New York, 1989. 
				\bibitem{BM} H.~Brezis, and M.~Marcus, Hardy's inequalities revisited, {\em Ann. Scuola Norm. Sup. Pisa Cl. Sci.} (4) {\bf 25} (1997),  217--237. 
				
				\bibitem{CFKS} H.~L.~Cycon, R.~G.~Froese, W.~Kirsch, and B.~Simon, ``Schr\"odinger Operators with Application to Quantum Mechanics and Global Geometry", Texts and Monographs in Physics. Springer Study Edition. Springer-Verlag, Berlin, 1987.
								
				\bibitem{D} E.~B.~Davies, A review of Hardy inequalities, {\em  in:} ``The Maz'ya Anniversary Collection" {\bf 2}, 55--67, Oper. Theory Adv. Appl., 110, Birkh\"auser, Basel, 1999.
				
					\bibitem{DFP} B.~Devyver, M.~Fraas, and Y.~Pinchover, Optimal Hardy weight for second-order elliptic operator: an answer to a problem of Agmon, \textit{J. Funct. Anal.} \textbf{266} (2014), 4422--4489.

					\bibitem{DP1}
				B.~Devyver, and Y.~Pinchover, Optimal $L^p$ Hardy-type inequalities,  \textit{Ann.  Inst.  H. Poincar\'{e}. Anal. Non Lin\'{e}aire} \textbf{33} (2016), 93--118.
	
\bibitem{E}  V.~P.~Ermakov, Second order differential equations: conditions of complete integrability, {\em Universita Izvestia Kiev}, Series {\bf III 9} 1--25 (Russian), Translated from the 1880 Russian original by A.~O.~Harin and edited by P.~G.~L.~Leach, {\em Appl. Anal. Discrete Math.} {\bf 2} (2008), 123--145.

	\bibitem{FT}
	S.~Filippas, and A.~Tertikas, Optimizing improved Hardy inequalities, {\em J. Funct. Anal.} {\bf 192} (2002), 186--233.
 %
 \bibitem{H} F.~Haas, The damped Pinney equation and its applications to dissipative quantum mechanics, {\em Phys. Scr.}  {\bf 81} (2010),  025004 (7 pp.).
%
 \bibitem{KPP} M.~Keller, Y.~Pinchover, and F.~Pogorzelski, Optimal Hardy inequalities for Schr\"odinger operators on graphs, {\em Comm. Math. Phys.} {\bf 358} (2018), 767--790. 
 %
 \bibitem{KPP1} M.~Keller, Y.~Pinchover, and F.~Pogorzelski, Criticality theory for Schrödinger operators on graphs, to appear in {\em J. Spectr. Theory}, 34 pp., arXiv: 1708.09664.
%
\bibitem{KP}	H.~Kova\v{r}\'{\i}k, and Y.~Pinchover, On minimal decay at infinity of Hardy-weights, to appear in \textit{Commun. Contemp. Math}, 16 pp., arXiv: 1812.01849.  
%
	\bibitem{KML} A. ~Kufner, L.~Maligranda, and L.~Persson, ``The Hardy Inequality: About its History and Some Related Results", Vydavatels\'y Servis, Plzen, 2007. 
	
	\bibitem{KO}  A.~Kufner, and B.~Opic, ``Hardy-type Inequalities", Pitman Research Notes in Math., Vol.	219, Longman, Harlow,1990.
	%
	\bibitem{LL}  E.~Lieb, and M.~Loss,  ``Analysis", Graduate Studies in Mathematics 14. American Mathematical Society, Providence, RI, 2001.
	%
		\bibitem{MU} M.~Murata, Structure of positive solutions to $(-\Delta+V)u=0$ in $\R^n$, \textit{Duke Math. J.} {\bf53} (1986), 869--943.
	%
	\bibitem{P3} Y.~Pinchover, 	Topics in the theory of positive solutions of second-order elliptic and parabolic partial differential equations.  in: ``Spectral Theory and Mathematical Physics: a Festschrift in Honor of Barry Simon's 60th Birthday",  eds. F. Gesztesy, et al., Proceedings of Symposia in Pure Mathematics 76 Part 1, American Mathematical Society, Providence, RI, 2007, 329--356.
	%
		\bibitem{PP} Y.~Pinchover, and G.~Psaradakis, On positive solutions of the $(p,A)$-Laplacian with potential in Morrey space, \textit{Anal. PDE} \textbf{9} (2016), 1317--1358. 
	%
	\bibitem{PT} Y.~Pinchover, and K.~Tintarev, Ground state alternative for p-Laplacian with potential term, \textit{Calc. Var. Partial Differential Equations} \textbf{28} (2007), 179--201.
%
\bibitem{PI} E.~Pinney, The nonlinear differential equation $y''+ p(x) y+ cy^{-3}= 0$,\textit{ Proceedings of the American Mathematical Society} {\bf 1} (1950), 681.
		%
\bibitem{Z1}	A.~Zettl, ``Sturm-Liouville Theory",  Mathematical Surveys and Monographs 121. American Mathematical Society, Providence, RI, 2005.
	\end{thebibliography}

\end{document}